        \theoremstyle{plain}
        \newtheorem{theorem}{Theorem}
	\newtheorem{corollary}{Corollary}
 	\newtheorem{lemma}{Lemma}
	\newtheorem{proposition}{Proposition}
        \theoremstyle{definition}
	\newtheorem{example}{Example}
        \theoremstyle{remark}
\newcolumntype{P}[1]{>{\centering\arraybackslash}p{#1}}
\newcolumntype{M}[1]{>{\centering\arraybackslash}m{#1}}
\newcommand{\memmax}{\texttt{mem}_{\max}}
\newcommand{\vc}{\bm{c}}
\newcommand{\ve}{\bm{e}}
\newcommand{\vg}{\bm{g}}
\newcommand{\vgline}{\underline{\vg}}
\newcommand{\vx}{\bm{x}}
\newcommand{\vy}{\bm{y}}
\newcommand{\vC}{\bm{C}}
\newcommand{\vD}{\bm{D}}
\newcommand{\vE}{\bm{E}}
\newcommand{\vEline}{\underline{\vE}}
\newcommand{\vM}{\bm{M}}
\newcommand{\vV}{\bm{V}}
\newcommand{\vX}{\bm{X}}
\newcommand{\vZ}{\bm{Z}}
\newcommand{\bHH}{\bm{\mathsf{H}}}
\newcommand{\bHHline}{\underline{\bHH}}
\newcommand{\bJJ}{\bm{\mathsf{J}}}
\newcommand{\bMM}{\bm{\mathsf{M}}}
\newcommand{\bVV}{\bm{\mathcal{V}}}
\newcommand{\DD}{\mathcal{D}}
\newcommand{\LL}{\mathcal{L}}
\newcommand{\HH}{\mathcal{H}}
\newcommand{\HHline}{\underline{\HH}}
\newcommand{\II}{\mathcal{I}}
\newcommand{\IIline}{\underline{\II}}
\newcommand{\JJ}{\mathcal{J}}
\renewcommand{\LL}{\mathcal{L}}
\newcommand{\MM}{\mathcal{M}}
\newcommand{\QQ}{\mathcal{Q}}
\newcommand{\QQbar}{\bar{\mathcal{Q}}}
\newcommand{\spK}{\mathscr{K}} 
\newcommand{\spP}{\mathbb{P}} 
\newcommand{\spC}{\mathbb{C}}
\newcommand{\spR}{\mathbb{R}}
\DeclareMathOperator{\blkspn}{span^\square}
\DeclareMathOperator{\vect}{vec}
\DeclareMathOperator{\spec}{spec}
\renewcommand{\vec}[1]{\vect\left(#1\right)}
\DeclareMathOperator*{\argmin}{arg\,min}
\newcommand{\eps}{\varepsilon}
\newcommand{\bigO}[1]{\mathcal{O}\left(#1\right)}
\newcommand{\inv}{{-1}}
\newcommand{\cinv}{{-*}}
\newcommand{\pinv}{{\dagger}}
\newcommand{\one}{{(1)}}
\newcommand{\two}{{(2)}}
\renewcommand{\k}{{(k)}}
\newcommand{\kless}{{(k-1)}}
\newcommand{\G}{\mbox{\normalfont\tiny\sffamily G}}
\newcommand{\MR}{\mbox{\normalfont\tiny\sffamily MR}}
\newcommand{\PMR}{\mbox{\normalfont\tiny\sffamily PMR}}
\newcommand{\NKS}{\mbox{\normalfont\tiny\sffamily NKS}}
\renewcommand{\mod}{\mbox{\normalfont\tiny\sffamily mod}}
\newcommand{\norm}[1]{\left\lVert#1\right\rVert}
\newcommand{\normF}[1]{\left\lVert#1\right\rVert_{\text{F}}}
\newcommand{\abs}[1]{\left|#1\right|}
\newcommand{\kl}[1]{{#1}}
\newcommand{\dpp}[1]{{#1}}
\newcommand{\badconddiag}{\texttt{bad\_cond\_diag}\xspace}
\newcommand{\convdiff}{\texttt{conv\_diff\_3d}\xspace}
\newcommand{\laplacian}{\texttt{laplacian\_2d}\xspace}
\newcommand{\logdiag}{\texttt{log\_diag}\xspace}
\newcommand{\rail}{\texttt{rail\_1357}\xspace}
\newcommand{\iss}{\texttt{iss}\xspace}
\DeclareMathOperator{\diag}{diag} 
\begin{document}


\title{Low-rank-modified Galerkin methods for the Lyapunov equation}
  
\author[$\ast$]{Kathryn Lund}
\affil[$\ast$]{Computational Methods in Systems and Control Theory, Max Planck Institute for Dynamics of Complex Technical Systems, Sandtorstr.\ 1, Magdeburg, 39106, Germany.\authorcr
  \email{lund@mpi-magdeburg.mpg.de}, \orcid{0000-0001-9851-6061}}
  
\author[$\dagger$]{Davide Palitta}
\affil[$\dagger$]{Dipartimento di Matematica, Piazza di Porta S. Donato 5, 40126 Bologna, Italy\authorcr
  \email{davide.palitta@unibo.it}, \orcid{0000-0002-6987-4430}}
  
\shorttitle{}
\shortauthor{K.~Lund, D.~Palitta}
\shortdate{}
  
\keywords{Lyapunov equation, matrix equation, block Krylov subspace, model order reduction}

\msc{65F45, 65F50, 65F10, 65N22, 65J10}
  
\abstract{Of all the possible projection methods for solving large-scale Lyapunov matrix equations, Galerkin approaches remain much more popular than minimal-residual ones. This is mainly due to the different nature of the projected problems stemming from these two families of methods. While a Galerkin approach leads to the solution of a low-dimensional matrix equation per iteration, a matrix least-squares problem needs to be solved per iteration in a minimal-residual setting. The significant computational cost of these least-squares problems has steered researchers towards Galerkin methods in spite of the appealing properties of minimal-residual schemes. In this paper we introduce a framework that allows for modifying the Galerkin approach by low-rank, additive corrections to the projected matrix equation problem with the two-fold goal of attaining monotonic convergence rates similar to those of minimal-residual schemes while maintaining essentially the same computational cost of the original Galerkin method. We analyze the well-posedness of our framework and determine possible scenarios where we expect the residual norm attained by two low-rank-modified variants to behave similarly to the one computed by a minimal-residual technique. A panel of diverse numerical examples shows the behavior and potential of our new approach.}

\novelty{A new framework for projection methods is developed for Lyapunov matrix equations.  In particular, this framework permits a cheap estimate to minimal-residual methods using low-rank modifications and block Krylov subspace methods.}

\maketitle
\section{Introduction} \label{sec:intro}
We are interested in the numerical solution of large-scale Lyapunov equations of the form
\begin{equation} \label{eq:lyap}
	A X + X A^* + \vC \vC^* = 0,
\end{equation}
where $A \in \spC^{n \times n}$ is large and sparse and $\vC \in \spC^{n \times r}$, $r \ll n$, has low rank.  Throughout the text, we use plain, uppercase letters to refer to square matrices (e.g., $A$), boldface uppercase letters to refer to \emph{block vectors} (e.g., $\vC$), and boldface lowercase letters for column vectors (e.g., $\vc$).  When a square matrix has block structure, it becomes italicized ($\HH$), and the concatenation of block vectors into a basis is denoted with bold italics (e.g., $\bVV$). Matrices with Kronecker structure are formatted with bold sans serif font (e.g., $\bHH$).

The Lyapunov equation \eqref{eq:lyap} is encountered in many applications. For instance, in some model reduction~\cite{Ant05b} and robust/optimal control strategies \cite{ZhoDG96} a Lyapunov equation, or a sequence of such equations, has to be solved. Moreover, the discretization of certain elliptic partial differential equations (PDEs) leads to an algebraic problem that can be often represented in terms of a Lyapunov equation; see, e.g., \cite{PalS16}. We refer the interested reader to the survey papers \cite{BenS13a,Sim16a} and the references therein for more details about the aforementioned applications and further research areas where Lyapunov equations play an important role.

We assume the matrix $A$ to be stable: its spectrum is contained in the left-half open complex plane $\spC_-$. Therefore, the solution $X$ to \eqref{eq:lyap} is Hermitian positive semi-definite \cite{SnyZ70}. Moreover, it is well-known that the singular values of $X$ rapidly decay to zero, if certain further assumptions on $A$ are considered; see, e.g., \cite{BakES15,Pen00}. In this case, the solution $X$ can be well approximated by a low-rank matrix $\vZ \vZ^*\approx X$, $\vZ \in \spC^{n \times t}$, $t \ll n$, and the computation of the low-rank factor $\vZ$ is the task of the so-called \emph{low-rank methods}. Numerous, diverse algorithms belong to this broad class of solvers. Some examples are projection methods \cite{DruS11,Sim07}, low-rank Alternating Direction Implicit (ADI) methods \cite{LiW02,Kur16}, sign function methods \cite{BauB06,Bau08a}, and Riemannian optimization methods \cite{VanV10}. A more complete list of low-rank solvers can be found in the surveys \cite{BenS13a, Sim16a}.

We focus on projection methods, and we propose a novel framework for solving \eqref{eq:lyap}. Given a suitable subspace $\spK$ and a matrix $\bVV_m \in \spC^{n \times mr}$, $mr < n$, whose columns represent an orthonormal basis of $\spK$, projection methods seek an approximate solution $X_m$ to \eqref{eq:lyap} of the form $X_m = \bVV_m Y_m \bVV_m^*$. The square matrix $Y_m \in \spC^{mr \times mr}$ can be computed in different ways. Most of the schemes available in the literature compute $Y_m$ by imposing a Galerkin condition on the residual matrix $R_m = A X_m + X_m A^*-\vC \vC^*$. Imposing such a condition, namely $\bVV_m^* R_m \bVV_m = 0$, is equivalent to computing $Y_m$ as the solution of the \emph{projected} Lyapunov equation
\begin{equation} \label{eq:lyap_projected}
	\HH_m Y + Y \HH_m^* + \vE_1 \Gamma \Gamma^* \vE_1^* = 0,
\end{equation}
where $\HH_m = \bVV_m^* A \bVV_m$, $\vV_1 \Gamma$ is the economic QR factorization of $\vC$, and $\vE_1=e_1\otimes I_r$; see, e.g., \cite{Sim07, DruS11}.

A less explored alternative consists of computing $Y_m$ by imposing a \dpp{minimal residual (MR) condition, namely we compute} $Y_m$ as follows:
\begin{equation} \label{eq:lyap_projected_MR}
	Y_m = \argmin_Y \normF{ \HHline_m Y [I_m \,\, 0] + [I_m \,\, 0]^* Y \HHline_m^* + \vEline_1 \Gamma \Gamma^* \vEline_1 },
\end{equation}
where $\HHline_m = \bVV_{m+1}^* A \bVV_m$ and $\normF{\cdot}$ is the Frobenius norm\footnote{Computing $Y_m$ as in \eqref{eq:lyap_projected_MR} is equivalent to imposing a Petrov-Galerkin condition on the residual for a specific choice of the test space; see, e.g.,~\cite[Section 5]{PalS20}.}; see, e.g., \cite{LinS13, HuR92}.

In spite of their appealing minimization property, MR methods for matrix equations are not commonly adopted.  The solution of the matrix least squares problem in \eqref{eq:lyap_projected_MR} can be remarkably more expensive than solving \eqref{eq:lyap_projected}; see \cite{LinS13}, as well as our own numerical results in Section~\ref{Numerical results}.  Moreover, the matrix $Y_m$ computed by \eqref{eq:lyap_projected_MR} may be indefinite (in floating-point as well as in exact arithmetic), meaning that the computed approximation $X_m = \bVV_m Y_m \bVV_m^*$ is as well, even though the solution $X$ to \eqref{eq:lyap} is semi-definite.  For further discussion about this peculiar drawback of MR methods for Lyapunov equations, see \cite[Section~5]{PalS20}, and for a similar approach for algebraic Riccati equations with global Krylov subspace methods, see \cite{JbiR19}.

Instead, we propose to compute $Y_m$ as the solution of a low-rank modification (LRM) of \eqref{eq:lyap_projected}, whereby the coefficient matrix $\HH_m$ is replaced by $\HH_m + \MM$ for a certain low-rank matrix $\MM$.  This approach is inspired by a similar technique for univariate matrix functions~\cite{FroLS20}. In particular, we consider the so-called ``harmonic modification'' $\MM$, which addresses some computational issues of projection methods while maintaining interesting theoretical features.  We refer to the resulting method as a pseudo-minimal residual (PMR) method, given that it appears to closely approximate the MR approximation for \eqref{eq:lyap} with symmetric $A$.

In Section~\ref{sec:lrm_framework} we introduce a bivariate LRM framework that generalizes the univariate one from~\cite{FroLS20}.  We precisely define the PMR method in Section~\ref{sec:pmr}, quantify how close it is to MR via an eigenvalue analysis, and propose another LRM that minimizes the residual over a structured space of Kronecker sums.  We show how general LRM approaches can be combined with the compress-and-restart strategy of \cite{KreLMetal21} in Section~\ref{sec:cr}.  Results of numerical experiments are presented in Section~\ref{sec:num_ex}, and we summarize our findings and contributions in Section~\ref{sec:conclusions}.

\section{Low-rank modification framework} \label{sec:lrm_framework}
An LRM framework for block Krylov subspace methods (KSMs) has been previously developed for matrix functions; see, in particular, \cite{FroLS20}.  While much of the framework transfers easily to matrix equations, we must take some care, since we are moving from a univariate framework to a bivariate one.

We begin by defining the $m$th \emph{block Krylov subspace} for $A$ and $\vC$ in terms of the block span, denoted here as $\blkspn$:
\begin{align}
	\spK_m(A, \vC)
	& := \blkspn\{\vC, A \vC, \ldots, A^{m-1} \vC\} = \left\{\sum_{j=0}^{m-1} A^{j} \vC \Gamma_j : \{\Gamma_j\}_{j=0}^{d} \subset \spC^{r \times r} \right\}. \label{eq:block_Krylov_subspace}
\end{align}

Note that $\spK_m(A, \vC) \subset \spC^{n \times r}$, i.e., its elements are \emph{block vectors}.  This is in contrast to the column span treatment of block KSMs, which is often used for solving linear systems with a column right-hand side.  For a foundational resource on the different interpretations of block KSMs and how they relate to each other, see \cite{Gut07}.

Generating $\spK_m(A, \vC)$ via the block Arnoldi process gives rise to the \emph{block Arnoldi relation}
\begin{equation} \label{eq:arnoldi}
	A \bVV_m = \bVV_{m+1} \HHline_m = \bVV_m \HH_m + \vV_{m+1} H_{m+1,m} \vE_m^*,
\end{equation}
where $\bVV_{m+1} \in \spC^{n \times (m+1)r}$ is orthonormal, $\HHline_m = \begin{bmatrix} \HH_m \\ H_{m+1,m} \vE_m^* \end{bmatrix}$ is block upper Hessenberg, and $\vE_m = \ve_m \otimes I_r$ is a unit block vector.

With the block Arnoldi decomposition, we can directly write the Galerkin approximation to \eqref{eq:lyap} over $\spK_m(A, \vC)$.  First, we project \eqref{eq:lyap} down and solve the $mr \times mr$ problem
\begin{equation} \label{eq:lyap_galerkin}
	\HH_m Y + Y \HH_m^* + \vE_1 \Gamma \Gamma^* \vE_1^* = 0.
\end{equation}
Letting $Y_m^{\G} \in \spC^{mr \times mr}$ denote the solution to~\eqref{eq:lyap_galerkin}, we define the Galerkin approximation to~\eqref{eq:lyap} as
\begin{equation} \label{eq:X_galerkin}
	X_m^{\G} := \bVV_m Y_m^{\G} \bVV_m^*.
\end{equation}

We can make LRMs to the Galerkin solution in much the same way as for matrix functions by simply replacing $\HH_m$ in \eqref{eq:lyap_galerkin} with $\HH_m + \MM$, where $\MM = \vM \vE_m^*$ for some matrix $\vM \in \spC^{mr \times r}$.  It is possible to choose $\MM$ so that $\HH_m + \MM$ has harmonic Ritz values (as in GMRES) or so that certain Ritz values are prescribed; see, e.g., \cite{FroGS14a, FroLS17, FroLS20} and Section~\ref{sec:pmr}.  Most importantly, it is possible to choose $\MM$ so that the resulting approximation is still in the product of the Krylov subspaces $\spK_m(A, \vC) \times \spK_m(A, \vC)$, which Lemma~\ref{lem:poly_exact} demonstrates.

Letting $Y_m^{\mod}$ denote the solution to the modified problem,
\begin{equation} \label{eq:lyap_mod_galerkin}
	(\HH_m + \MM) Y + Y (\HH_m + \MM)^* + \vE_1 \Gamma \Gamma^* \vE_1^* = 0,
\end{equation}
leads to an approximate solution $X_m^{\mod} := \bVV_m Y_m^{\mod} \bVV_m^*$, whose residual norm can be cheaply computed, as shown in the next proposition.
\begin{proposition} \label{prop:res_lrm_galerkin}
    Let $Y_m^{\mod}$ be the solution to the Lyapunov equation \eqref{eq:lyap_mod_galerkin}. Then the residual matrix $R_m^{\mod} = A X_m^{\mod} + X_m^{\mod} A^* + \vC \vC^*$ can be written as
    \begin{equation} \label{eq:res_comp_mod}
        R_m^{\mod} = \bVV_{m+1} G_m
        \begin{bmatrix}
                & I_r 	&		\\
            I_r & 		& -I_r	\\
                & -I_r 	&
        \end{bmatrix}
        G_m^* \bVV_{m+1}^* \in \spC^{n \times n},
    \end{equation}
    where
    \[
    G_m =
        \begin{bmatrix}
        \vE_{m+1}H_{m+1,m}	& \IIline Y_m^{\mod} \vE_m	& \IIline \vM
    \end{bmatrix}
    \in \spC^{n \times 3r}
    \]
    and
    $\IIline = \begin{bmatrix} I_{mr} \\ 0 \end{bmatrix} \in \spR^{(m+1)r \times mr}$. Moreover,
    \begin{equation} \label{eq:res_norm_mod}
        \|R_m^{\mod}\|_F^2 = 2\left(\|Y_m^{\mod}\vE_mH_{m+1,m}^*\|_F^2+
        \|Y_m^{\mod}\vE_m\vM^*\|_F^2+2\cdot\text{trace}((\vE_m^*Y_m^{\mod}\vM
        )^2)\right).
    \end{equation}
	
\end{proposition}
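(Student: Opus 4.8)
The plan is a direct computation carried out in three moves: (i) insert the block Arnoldi relation \eqref{eq:arnoldi} into $R_m^{\mod}=AX_m^{\mod}+X_m^{\mod}A^*+\vC\vC^*$; (ii) use the modified projected equation \eqref{eq:lyap_mod_galerkin} to cancel the part of the residual that lives entirely in $\spn(\bVV_m)$; (iii) factor $\bVV_{m+1}$ out on both sides and read off the claimed form, then square it.

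For (i)--(ii): write $X_m^{\mod}=\bVV_m Y_m^{\mod}\bVV_m^*$, use $A\bVV_m=\bVV_m\HH_m+\vV_{m+1}H_{m+1,m}\vE_m^*$, and note $\vC=\bVV_m\vE_1\Gamma$ (the QR factor of $\vC$ sits in the leading block column of $\bVV_m$). Collecting terms gives
\begin{align*}
R_m^{\mod}={}&\bVV_m\left(\HH_m Y_m^{\mod}+Y_m^{\mod}\HH_m^*+\vE_1\Gamma\Gamma^*\vE_1^*\right)\bVV_m^*\\
&+\vV_{m+1}H_{m+1,m}\vE_m^* Y_m^{\mod}\bVV_m^*+\bVV_m Y_m^{\mod}\vE_m H_{m+1,m}^*\vV_{m+1}^*.
\end{align*}
Since $\MM=\vM\vE_m^*$, equation \eqref{eq:lyap_mod_galerkin} says the bracketed matrix equals $-\vM\vE_m^* Y_m^{\mod}-Y_m^{\mod}\vE_m\vM^*$, so the $\bVV_m(\cdot)\bVV_m^*$ contribution becomes $-\bVV_m\vM\vE_m^* Y_m^{\mod}\bVV_m^*-\bVV_m Y_m^{\mod}\vE_m\vM^*\bVV_m^*$. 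Here I use that $Y_m^{\mod}$ is Hermitian (the unique solution of \eqref{eq:lyap_mod_galerkin} inherits Hermitian symmetry from the Hermitian right-hand side whenever the underlying Sylvester operator is invertible, which is the well-posedness discussed in the paper), so that $\vE_m^* Y_m^{\mod}\bVV_m^*=(\bVV_m Y_m^{\mod}\vE_m)^*$. Thus $R_m^{\mod}$ is a sum of four rank-$r$ outer products.

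For (iii): with $\vV_{m+1}=\bVV_{m+1}\vE_{m+1}$ and $\bVV_m=\bVV_{m+1}\IIline$, the four summands are $\bVV_{m+1}$ times $\bm{P}_1\bm{P}_2^*+\bm{P}_2\bm{P}_1^*-\bm{P}_3\bm{P}_2^*-\bm{P}_2\bm{P}_3^*$ times $\bVV_{m+1}^*$, where $\bm{P}_1:=\vE_{m+1}H_{m+1,m}$, $\bm{P}_2:=\IIline Y_m^{\mod}\vE_m$, $\bm{P}_3:=\IIline\vM$ are exactly the block columns of $G_m$; a one-line check that $G_m S G_m^*$ equals this sum of signed outer products for $S$ the stated sign matrix yields \eqref{eq:res_comp_mod}. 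For the norm, since $\bVV_{m+1}$ has orthonormal columns one has $\|R_m^{\mod}\|_F^2=\|G_m S G_m^*\|_F^2=\trace\big((R_m^{\mod})^2\big)$ (the last step uses that $R_m^{\mod}$ is Hermitian). Expanding the square, the structural input is that $\bm{P}_1$ is supported on the last block row while $\bm{P}_2,\bm{P}_3$ are supported on the first $mr$ rows, so $\bm{P}_1^*\bm{P}_2=\bm{P}_1^*\bm{P}_3=0$; this annihilates every cross term pairing $\bm{P}_1$ with $\bm{P}_2$ or $\bm{P}_3$. What remains are two contributions of the form $2\|\bVV_m Y_m^{\mod}\vE_m H_{m+1,m}^*\vV_{m+1}^*\|_F^2=2\|Y_m^{\mod}\vE_m H_{m+1,m}^*\|_F^2$ and $2\|Y_m^{\mod}\vE_m\vM^*\|_F^2$, together with the contributions of the squares of $\bm{P}_3\bm{P}_2^*$ and $\bm{P}_2\bm{P}_3^*$, which reduce (by the cyclic property) to $\trace\big((\bm{P}_2^*\bm{P}_3)^2\big)$ and $\trace\big((\bm{P}_3^*\bm{P}_2)^2\big)$; since $\bm{P}_2^*\bm{P}_3=\vE_m^* Y_m^{\mod}\vM$, these combine into the trace correction in \eqref{eq:res_norm_mod}. (Equivalently, set $N:=G_m^*G_m$, which is block diagonal except for the coupling block $\vE_m^* Y_m^{\mod}\vM$, and compute $\|G_m S G_m^*\|_F^2=\trace\big((SN)^2\big)$ directly.)

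The only genuinely delicate part is the sign-and-orthogonality bookkeeping in this last expansion: one must carry the signs of the four rank-$r$ pieces of $R_m^{\mod}$ correctly, observe that exactly one pair of blocks — $\bVV_m\vM$ and $\bVV_m Y_m^{\mod}\vE_m$ — fails to be orthogonal, and recognize that this single non-orthogonal pair is precisely what produces the trace term rather than another squared Frobenius norm. Everything else (the cancellation of the interior block, the identification of $G_m$ and of the sign matrix $S$) is forced by \eqref{eq:arnoldi} and \eqref{eq:lyap_mod_galerkin}, with no further input required.
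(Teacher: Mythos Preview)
Your proof is correct and follows essentially the same route as the paper's: substitute the block Arnoldi relation~\eqref{eq:arnoldi}, use the modified projected equation~\eqref{eq:lyap_mod_galerkin} to eliminate the interior block, rewrite everything with the embeddings $\vV_{m+1}=\bVV_{m+1}\vE_{m+1}$ and $\bVV_m=\bVV_{m+1}\IIline$, and then compute the Frobenius norm via the orthogonality relations $\vE_{m+1}^*\IIline=0$, $\IIline^*\IIline=I_{mr}$, $\vE_{m+1}^*\vE_{m+1}=I_r$. The paper's proof is terser on the norm step (it simply says ``a direct computation shows~\eqref{eq:res_norm_mod}''), whereas you spell out the key structural fact that $\bm P_1$ is orthogonal to $\bm P_2$ and $\bm P_3$, which is exactly what makes the expansion collapse; your alternative remark that one may compute $\trace\big((SN)^2\big)$ with $N=G_m^*G_m$ block-structured is a clean way to organize that same calculation.
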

\begin{proof}
	Thanks to the block Arnoldi relation~\eqref{eq:arnoldi} and since  $X_m^{\mod} := \bVV_m Y_m^{\mod} \bVV_m^*$,
	\begin{align*}
		R_m^{\mod} = & AX_m^{\mod} + X_m^{\mod} A^* - \vC \vC^*	\\
		= &	\bVV_m \left( (\HH_m +\MM) Y_m^{\mod} + Y_m^{\mod} (\HH_m +\MM)^*
		- \vE_1 \Gamma \Gamma^* \vE_1 \right) \bVV_m^*	\\
		& + \vV_{m+1} H_{m+1,m} \vE_m^*Y_m^{\mod} \bVV_m^*
		+ \bVV_m Y_m^{\mod} \vE_m H_{m+1,m}^* \vV_{m+1}^*	\\
		& -\bVV_m\MM Y_m^{\mod}\bVV_m^* - \bVV_m Y_m^{\mod}\MM^*\bVV_m^*	\\
		= & \bVV_{m+1} \left( \vE_{m+1} H_{m+1,m} \vE_m^* Y_m^{\mod} \IIline^* + \IIline Y_m^{\mod} \vE_m H_{m+1,m}^* \vE_{m+1}^* - \IIline \MM Y_m^{\mod} \IIline^* \right.	\\
		& \left.- \IIline Y_m^{\mod} \MM^*\IIline^* \right) \bVV_{m+1}^*.
	\end{align*}
	The results follow by plugging the low-rank form $\MM = \vM \vE_m^*$ in the expression above. Similarly, by recalling that $\vE_{m+1}^*\IIline=0$, $\IIline^*\IIline=I_{mr}$, and $\vE_{m+1}^*\vE_{m+1}=I_r$, a direct computation shows~\eqref{eq:res_norm_mod}.
\end{proof}

In Algorithm~\ref{alg:LRMG} the low-rank modified Galerkin approach for~\eqref{eq:lyap} is outlined, where $m_{\max}$ denotes the maximum number of block basis vectors and $\eps > 0$ is the desired relative residual tolerance. Note that the algorithm reduces to the standard Galerkin approach whenever $\MM = 0$.
\begin{algorithm}[t!]
	\caption{LRM Galerkin approach for Lyapunov equations \label{alg:LRMG}}
	\begin{algorithmic}[1]
    	\State{\textbf{input} $A \in \spC^{n\times n}$, $\vC \in \spC^{n \times r}$, $m_{\max}$, $\eps$}
        \State{\textbf{output} $\bVV_m = \begin{bmatrix} \vV_1 & \vV_2 & \cdots & \vV_m \end{bmatrix} \in \spC^{n\times mr}$, $Y_m^{\mod} \in \spC^{mr \times mr}$}
    	\State{Compute an economic QR factorization of $\vC = \vV_1 \Gamma$}
    	\For{$m = 1, \ldots, m_{\max}$}
    		\State{Compute the next basis block $\vV_{m+1}$ and update $\HH_m$}
    		\State{Compute $Y_m^{\mod}$ as the solution of~\eqref{eq:lyap_mod_galerkin}}
    		\State{Compute $\normF{R_m^{\mod}}$}
    		\If{$\normF{R_m^{\mod}} \leq \eps \cdot \normF{\vC^*\vC}$}
    		    \State{\textbf{return} $\bVV_m$ and $Y_m^{\mod}$}
		    \EndIf
		 \EndFor
    \end{algorithmic}
\end{algorithm}

The connection between matrix equations and bivariate functions of matrices has been well established by Kressner \cite{Kre14, Kre19}.  In fact, the LRM framework holds for bivariate matrix functions in general, and thus a wide array of other applications, such as Fr\'echet derivatives, the Stein equation, and time-limited and frequency-limited balanced truncation model reduction \cite{BenKS16a, GawJ90}.

Let $\spP_{k, \ell}(\spC, \spC)$ denote the space of bivariate matrix polynomials of degree $k$ in the first variable and degree $\ell$ in the second.  Like univariate matrix functions \cite{Hig08a}, bivariate matrix functions can be defined as Hermite interpolating polynomials evaluated on the matrices $A$ and $B$ (of compatible dimension).  In more detail,
\[
	f\{A, B\}(\vC \vD^*) := p_{k,l}\{A, B\}(\vC \vD^*) := \sum_{i=0}^{k} \sum_{j=0}^{\ell} \alpha_{ij} A^i \vC (B^j \vD)^*,
\]
for some degrees $k$ and $\ell$ and scalars $\alpha_{ij}$, as long as assumptions on the eigenvalues of $A$ and $B$ are met; see \cite[Definition~2.3]{Kre14}.  In particular, the solution $X$ to the Lyapunov equation~\eqref{eq:lyap} can be expressed as the evaluation of the function $f(x,y) = \frac{1}{x+y}$, i.e., $X = f\{A, A^*\}(\vC \vC^*)$.

We state an alternative to \cite[Lemma~2]{Kre19} that accounts for LRMs for Lyapunov equations.
\begin{lemma} \label{lem:poly_exact}
    Let $\HH, \bVV$ denote the Arnoldi matrix and basis from \eqref{eq:arnoldi}.  For all bivariate polynomials $q$ of degrees $m-1, m-1$ or less, it holds that
    \[
	q(A,A^*)(\vC \vC^*) = \bVV \cdot q\{\HH + \MM, (\HH + \MM)^*\}(\vE_1 \Gamma \Gamma^* \vE_1) \cdot \bVV^*,
    \]
    as long as $\MM = \vM \vE_m^*$ for some $\vM \in \spC^{n \times r}$.
\end{lemma}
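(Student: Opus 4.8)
The plan is to reduce the bivariate identity to a single univariate Arnoldi-exactness relation, on which the low-rank modification $\MM = \vM\vE_m^*$ turns out to have no effect. Writing $q(x,y) = \sum_{i=0}^{m-1}\sum_{j=0}^{m-1}\alpha_{ij}\,x^iy^j$, the left-hand side expands as $q(A,A^*)(\vC\vC^*) = \sum_{i,j}\alpha_{ij}\,A^i\vC\,(A^j\vC)^*$, while $q\{\HH+\MM,(\HH+\MM)^*\}(\vE_1\Gamma\Gamma^*\vE_1^*) = \sum_{i,j}\alpha_{ij}\,(\HH+\MM)^i\vE_1\Gamma\Gamma^*\vE_1^*\big((\HH+\MM)^*\big)^j$. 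Since $\vC = \vV_1\Gamma = \bVV\vE_1\Gamma$ by the economic QR factorization, it therefore suffices to prove
\[
  A^i\vC = \bVV\,(\HH+\MM)^i\,\vE_1\Gamma, \qquad 0 \le i \le m-1 ;
\]
conjugate-transposing this relation supplies the matching factor for the second variable, and the lemma then follows by expanding the double sum and pulling $\bVV$ and $\bVV^*$ outside.

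To prove the displayed relation I would establish two facts by induction on $i$, both for $0 \le i \le m-1$: (a) $(\HH+\MM)^i\vE_1 = \HH^i\vE_1$, and (b) $A^i\vC = \bVV\,\HH^i\vE_1\,\Gamma$, the latter being the classical block-Arnoldi polynomial exactness (the one-variable version of \cite[Lemma~2]{Kre19}). Both inductions run on the same mechanism: since $\HH = \HH_m$ is block upper Hessenberg and we start from the leading block $\vE_1$, the vector $\HH^i\vE_1$ is supported only in block rows $1,\dots,i+1$, so $\vE_m^*\HH^i\vE_1 = 0$ whenever $i \le m-2$. For (a), the step from $i\le m-2$ to $i+1\le m-1$ gives $(\HH+\MM)^{i+1}\vE_1 = (\HH+\MM)\HH^i\vE_1 = \HH^{i+1}\vE_1 + \vM(\vE_m^*\HH^i\vE_1) = \HH^{i+1}\vE_1$, the modification term vanishing by the support property. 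For (b), that same property annihilates the rank-$r$ overflow term of the block-Arnoldi relation \eqref{eq:arnoldi}, namely $A^{i+1}\vC = A\bVV\HH^i\vE_1\Gamma = \bVV\HH^{i+1}\vE_1\Gamma + \vV_{m+1}H_{m+1,m}(\vE_m^*\HH^i\vE_1)\Gamma = \bVV\HH^{i+1}\vE_1\Gamma$. Chaining (a) and (b) yields $A^i\vC = \bVV\HH^i\vE_1\Gamma = \bVV(\HH+\MM)^i\vE_1\Gamma$ for every $i \le m-1$.

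The only point that needs attention is the degree budget: the support identity $\vE_m^*\HH^i\vE_1=0$ is valid only up to $i = m-2$, and one has to check that this is exactly sufficient — the induction step that first reaches the top index $i=m-1$ invokes the hypothesis at $i=m-2$, where the property still holds, so the degree-$(m-1,m-1)$ ceiling in the statement is sharp and no boundary case (including $m=1$, where $q$ is constant and $\MM$ never appears) is lost. Everything else is a routine bilinear expansion; in particular, $\Gamma$ is never inverted, so no separate treatment of the rank-deficient case is required.
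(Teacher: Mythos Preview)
Your proof is correct and follows essentially the same approach as the paper, which simply cites \cite[Lemma~2]{Kre19} and \cite[Theorem~2.7]{FroLS20} without spelling out details; your induction on the block-Hessenberg support property $\vE_m^*\HH^i\vE_1 = 0$ for $i \le m-2$ is precisely the mechanism underlying both of those cited results, so you have unpacked what the paper leaves to references.
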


The proof follows from a straightforward combination of \cite[Lemma~2]{Kre19} and \cite[Theorem~2.7]{FroLS20}.  The extension to Sylvester equations is straightforward; one just has to be careful with the dimensions for the second Krylov subspace.

In work being developed in parallel, it has been shown that equations of the form~\eqref{eq:lyap_galerkin} arise in Krylov subspace techniques combined with sketching; see~\cite{PalSS23a}. However, there is a fundamental difference between this setting and our novel low-rank modified Galerkin method. Indeed, while in the former the form of $\vM$ is solely dictated by the use of sketching, here we choose $\vM$ to try to meet a target behaviour in our solver.  
\section{A Pseudo-Minimal Residual Method} \label{sec:pmr}
The Generalized Minimal Residual method (GMRES) is a popular approach for linear systems, precisely because of the guaranteed monotonic behavior of the residual \cite{HesS52}.  It can be easily shown that (block) GMRES is equivalent in exact arithmetic to an LRM of the Full Orthogonalization Method (FOM); see \cite[Theorem~3.3]{SimG96} or \cite{FroGS14a, FroLS20}. By assuming $\HH_m$ is nonsingular and choosing
\begin{equation} \label{eq:vM_PMR}
    \vM^{\PMR} := \HH_m^\cinv \vE_m H_{m+1,m}^* H_{m+1,m},
\end{equation}
and $\MM^{\PMR} := \vM^{\PMR} \vE_m^*$, it holds that
\[
    \bVV_{m+1} \HHline_m^\pinv \vEline_1 = \bVV_{m+1} (\HH_m + \MM^{\PMR})^\inv \vE_1,
\]
which is also the GMRES solution for the linear system $A \vX = \vC$.

The modification $\MM^{\PMR}$ in \eqref{eq:lyap_mod_galerkin} does not guarantee a minimal residual approximation to~\eqref{eq:lyap}, but it comes close in many cases, leading to our moniker ``pseudo-minimal residual" (PMR).  The well-posedness of~\eqref{eq:lyap_mod_galerkin} is difficult to prove in general, since predicting the impact of the low-rank modification $\MM$ on the spectral properties of $\HH_m$ is seldom doable. In the next proposition we show that this is possible in case of $\MM^{\PMR}$.

\begin{proposition}\label{prop_Ysemi-definite}
    Let $A$ have its field of values in the left half of the complex plane, denoted here as $\spC^{-}$.\footnote{This property has been denoted as ``negative definiteness" or ``negative realness" in the literature, but with varying consistency, and we spell it out to avoid confusion.} Then for all $m$ such that $mr < n$ and such that the block Arnoldi algorithm does not break down, $\HH_m + \MM^{\PMR}$ is stable (i.e., its eigenvalues have negative real part), implying in particular that equation~\eqref{eq:lyap_mod_galerkin} has a unique, positive semi-definite solution.
\end{proposition}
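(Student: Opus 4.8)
The plan is to reduce the stability of $\HH_m + \MM^{\PMR}$ to two ingredients: a field-of-values inclusion inherited from $A$, and a compact rewriting of the modified coefficient matrix. First I would record that, since $\bVV_m$ has orthonormal columns and $\HH_m = \bVV_m^* A \bVV_m$, we have $z^* \HH_m z = (\bVV_m z)^* A (\bVV_m z)$ for every $z \neq 0$, so $\fov(\HH_m) \subseteq \fov(A) \subseteq \spC^{-}$, or equivalently $\HH_m + \HH_m^* \prec 0$. In particular $\HH_m$ is nonsingular, so $\MM^{\PMR}$ in \eqref{eq:vM_PMR} is well defined. Applying the same reasoning to $\HH_m^{-1}$ through the identity $\HH_m^{-1} + \HH_m^{-*} = \HH_m^{-1}(\HH_m + \HH_m^*)\HH_m^{-*}$ shows $\fov(\HH_m^{-1}) \subseteq \spC^{-}$, hence also $\fov(\HH_m^{-*}) \subseteq \spC^{-}$.

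Next I would exploit the structure of the harmonic modification. Writing $\MM^{\PMR} = \HH_m^{-*}\vE_m H_{m+1,m}^* H_{m+1,m}\vE_m^*$ and using $\HHline_m^*\HHline_m = \HH_m^*\HH_m + \vE_m H_{m+1,m}^* H_{m+1,m}\vE_m^*$, one obtains the key identity
\[
\HH_m + \MM^{\PMR} = \HH_m^{-*}\big(\HH_m^*\HH_m + \vE_m H_{m+1,m}^* H_{m+1,m}\vE_m^*\big) = \HH_m^{-*}\,\HHline_m^*\HHline_m .
\]
The factor $\HHline_m^*\HHline_m$ is Hermitian positive definite: from the block Arnoldi relation \eqref{eq:arnoldi}, $A\bVV_m = \bVV_{m+1}\HHline_m$ with $\bVV_{m+1}$ orthonormal and $A\bVV_m$ of full column rank $mr$ (as $A$ is nonsingular and $mr < n$ with no breakdown), so $\HHline_m$ has full column rank and $\HHline_m^*\HHline_m \succ 0$.

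It then remains to show that a product $N S$ with $\fov(N) \subseteq \spC^{-}$ and $S = S^* \succ 0$ is stable. I would argue directly: if $NSv = \lambda v$ with $v \neq 0$, set $w := Sv \neq 0$; then $Nw = \lambda S^{-1} w$, whence $w^* N w = \lambda\, w^* S^{-1} w$. Since $w^* S^{-1} w$ is a positive real number and $\Re(w^* N w) < 0$, we get $\Re(\lambda) = \Re(w^* N w)/(w^* S^{-1} w) < 0$. Applying this with $N = \HH_m^{-*}$ and $S = \HHline_m^*\HHline_m$ proves $\HH_m + \MM^{\PMR}$ is stable. Finally, stability implies $\HH_m + \MM^{\PMR}$ and $-(\HH_m + \MM^{\PMR})^*$ have disjoint spectra, so \eqref{eq:lyap_mod_galerkin} has a unique solution, and since the right-hand side $\vE_1\Gamma\Gamma^*\vE_1^*$ is Hermitian positive semi-definite, the standard integral representation of that solution shows it is Hermitian positive semi-definite as well; cf.\ \cite{SnyZ70}.

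The field-of-values bookkeeping and the closing Lyapunov-theory step are routine; the one place the specific form of $\MM^{\PMR}$ is used — and the step that needs the right idea — is the identity $\HH_m + \MM^{\PMR} = \HH_m^{-*}\HHline_m^*\HHline_m$, which converts an a priori uncontrolled low-rank perturbation of $\HH_m$ into the product of a matrix with field of values in $\spC^{-}$ and a Hermitian positive definite matrix. I do not anticipate genuine obstacles beyond recognizing that rewriting.
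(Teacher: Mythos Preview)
Your proof is correct and rests on the same core identity as the paper's, namely $\HH_m^*(\HH_m+\MM^{\PMR})=\HH_m^*\HH_m+\vE_m H_{m+1,m}^*H_{m+1,m}\vE_m^*$, followed by a Rayleigh-quotient argument. The paper proceeds slightly more directly by left-multiplying the eigenvalue equation by $\HH_m^*$ and reading off $\lambda$ as a positive real numerator over $\vx^*\HH_m^*\vx$, thereby using $\fov(\HH_m)\subseteq\spC^-$ immediately and avoiding your detour through $\fov(\HH_m^{-*})$.
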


\begin{proof}
    The proof is essentially a special case of part of the proof of \cite[Corollary~4.4]{FroLS20}.  For ease of reading, we reproduce it in our setting here.
    
    Fix $m$ such that $mr < n$ and assume that the Arnoldi algorithm does not break down.  It follows from~\eqref{eq:arnoldi} and the assumption on $A$ that $\HH_m$ also has its field of values in $\spC^{-}$. Now let $(\lambda, \vx)$ be an eigenpair of $\HH_m + \MM^{\PMR}$; namely,
    \begin{equation} \label{eq:eigenpair}
        (\HH_m + \MM^{\PMR}) \vx = \lambda \vx.
    \end{equation}
    Recall that $\MM^{\PMR} = \vM^{\PMR} \vE_m^* = \HH_m^\cinv \vE_m H_{m+1,m}^* H_{m+1,m}\vE_m^*$.  Left-multiplying \eqref{eq:eigenpair} by $\HH_m^*$ leads to
    \[
        (\HH_m^*\HH_m + \vE_m H_{m+1,m}^* H_{m+1,m}\vE_m^*) \vx = \lambda \HH_{m}^* \vx.
    \]
    Therefore,
    \begin{equation} \label{eq:lambda}
        \lambda =
        \frac{\vx^* \HH_m^* \HH_m \vx + \vx^* \vE_m H_{m+1,m}^* H_{m+1,m} \vE_m \vx}{\vx^* \HH_m^* \vx}.
    \end{equation}
    Since the numerator in \eqref{eq:lambda} is a positive real number whereas the denominator has negative real part, $\lambda$ has negative real part as well; consequently, $\HH_m + \MM^{\PMR}$ is stable.
    To conclude, the solution of \eqref{eq:lyap_mod_galerkin} for PMR is positive semi-definite, as it is the solution to a Lyapunov equation with a stable coefficient matrix and a positive semi-definite constant term given by $\vE_1 \Gamma \Gamma^* \vE_1^*$.
\end{proof}

We have already mentioned that, in the MR framework, the matrix $Y_m$ is not necessarily semi-definite even in the case of $A$ with field of values in $\spC^{-}$, as the former is the solution to the least squares problem~\eqref{eq:lyap_projected_MR}. The semi-definiteness of $Y_m$ can be included in the formulation of the least squares problem as an additional constraint; see, e.g.,~\cite{PalS20}. However, this constraint increases the already expensive computational cost of the solution of~\eqref{eq:lyap_projected_MR}. Proposition~\ref{prop_Ysemi-definite} shows that the semi-definiteness of $Y_m^{\PMR}$ is guaranteed in our novel setting.

In the following, we explore connections between PMR and actual minimal residual methods.
\subsection{Connections to the minimal residual method}
Recall that in the minimal residual framework, $Y_m$ solves the matrix least squares problem~\eqref{eq:lyap_projected_MR}.  By considering the Kronecker representations of the solutions to \eqref{eq:lyap_projected_MR} and \eqref{eq:lyap_mod_galerkin}, we can more directly relate the two approaches.

Define
\[
    \bHHline_m := \HHline_m \otimes \IIline + \IIline \otimes \HHline_m,
\]
and
\[
    \vgline_m = \vec{\vEline_1 \Gamma \Gamma^* \vEline_1^*}.
\]
The nonsingularity of the Kronecker matrix $\bHH_m$ follows from that of $\HH_m$.  Note that we work with these quantities only theoretically, as $\bHHline_m$ has dimensions $(m+1)^2 r^2 \times m^2 r^2$ and is dense, making it prohibitively expensive to form for even moderate $m$ and $r$ in practice.  The vector $\vgline_m$ has length $(m+1)^2r^2$.  We can therefore define $\vy_m^{{\MR}} := -\bHHline^\dagger \vgline_m$, which minimizes $\normF{\bHHline_m \vy + \vgline_m}$.  In fact, due to the equivalence between \eqref{eq:lyap_projected_MR} and the Kronecker problem $\normF{\bHHline_m \vy + \vgline_m}$, it holds that $\vy_m^{{\MR}} = \vec{Y_m^{{\MR}}}$; see, e.g., \cite[Chapter~4]{HorJ91}.

The PMR solution can be expressed in vectorized form as $\vy_m^{\PMR} := -\big(\bHH_m + \bMM_m^{\PMR} \big)^\inv \vg_m$, where
\begin{equation} \label{eq:bMM_PMR}
    \bMM_m^{\PMR} := \HH_m^\cinv \JJ_m \otimes \II + \II \otimes \HH_m^\cinv \JJ_m \mbox{ and } \JJ_m := \vE_m H_{m+1,m}^* H_{m+1,m} \vE_m^*.
\end{equation}

Note as well that for $\bMM^{\G} := 0$, we recover the vectorized Galerkin solution $\vy_m^{\G}$.  Furthermore, the following result holds true. 

\begin{proposition} \label{prop:MR_LRM_form}
    Define
    \begin{equation} \label{eq:bMM_MR}
        \bMM_m^{\MR} := \bHH_m^\cinv \big( \JJ_m \otimes \II + \II \otimes \JJ_m \big).
    \end{equation}
    Then
    \begin{equation} \label{eq:MR_LRM_form}
        \vy_m^{\MR} = -(\bHH_m + \bMM_m^{\MR})^\inv \vg_m.
    \end{equation}
\end{proposition}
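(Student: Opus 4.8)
The plan is to show that the least-squares solution $\vy_m^{\MR}$ of $\normF{\bHHline_m \vy + \vgline_m}$ is exactly the solution of a \emph{square} linear system whose coefficient matrix is $\bHH_m + \bMM_m^{\MR}$. First I would write down the normal equations for the least-squares problem: $\vy_m^{\MR}$ solves $\bHHline_m^* \bHHline_m \vy = -\bHHline_m^* \vgline_m$. The key observation is that the right-hand side simplifies: since $\vgline_m = \vec{\vEline_1 \Gamma \Gamma^* \vEline_1^*}$ lives in the ``unbarred'' coordinate block (the top-left $m^2r^2$ rows, because $\vEline_1 = \IIline(e_1 \otimes I_r)$ already has its nonzero part in the first $mr$ rows), we have $\bHHline_m^* \vgline_m = \bHH_m^* \vg_m$, using $\IIline^* \IIline = I_{mr}$ and $\IIline^*\vEline_1 = \vE_1$ componentwise in the Kronecker sum.

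Next I would expand $\bHHline_m^* \bHHline_m$ using the block-Hessenberg splitting $\HHline_m = \IIline \HH_m + \vE_{m+1} H_{m+1,m}\vE_m^*$ (the Kronecker analogue of~\eqref{eq:arnoldi}). Plugging this into $\bHHline_m = \HHline_m \otimes \IIline + \IIline \otimes \HHline_m$ and multiplying by its conjugate transpose, the cross terms involving $\vE_{m+1}$ collapse because $\vE_{m+1}^*\IIline = 0$, $\IIline^*\IIline = I_{mr}$, and $\vE_{m+1}^*\vE_{m+1} = I_r$ (exactly the identities used in Proposition~\ref{prop:res_lrm_galerkin}). What survives is
\[
    \bHHline_m^* \bHHline_m = \bHH_m^* \bHH_m + \big( \HH_m^* \otimes \II + \II \otimes \HH_m^* \big)\big(\JJ_m \otimes \II + \II \otimes \JJ_m\big),
\]
after expanding $H_{m+1,m}^* H_{m+1,m}$ into $\JJ_m = \vE_m H_{m+1,m}^* H_{m+1,m}\vE_m^*$ and recognizing that the Kronecker-sum structure of $\bHH_m$ produces exactly the two terms $\HH_m^*\otimes\JJ_m + \JJ_m\otimes\HH_m^*$ as cross terms together with $\II\otimes(\HH_m^*\JJ_m + \JJ_m\HH_m^*)$ pieces — all of which assemble into the product $(\HH_m^*\otimes\II + \II\otimes\HH_m^*)(\JJ_m\otimes\II+\II\otimes\JJ_m) = \bHH_m^*(\JJ_m\otimes\II+\II\otimes\JJ_m)$. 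Hence the normal equations read $\bHH_m^*\big[\bHH_m + (\JJ_m\otimes\II+\II\otimes\JJ_m)\big]\vy = -\bHH_m^*\vg_m$, and since $\bHH_m$ is nonsingular (stated in the excerpt, following from nonsingularity of $\HH_m$) we may cancel $\bHH_m^*$ on the left, obtaining $\big[\bHH_m + (\JJ_m\otimes\II+\II\otimes\JJ_m)\big]\vy = -\vg_m$. Finally, factoring out $\bHH_m$ on the left of the bracket gives $\bHH_m\big(\II + \bHH_m^\inv(\JJ_m\otimes\II+\II\otimes\JJ_m)\big)\vy = -\vg_m$, i.e. $(\bHH_m + \bMM_m^{\MR})\vy = -\vg_m$ with $\bMM_m^{\MR} = \bHH_m^\cinv(\JJ_m\otimes\II+\II\otimes\JJ_m)$ — wait, more directly, $(\bHH_m + \bMM_m^{\MR})\vy=-\vg_m$ is equivalent to $\bHH_m(\II+\bHH_m^\inv\bMM_m^{\MR})\vy=-\vg_m$, so I would instead keep $\bMM_m^{\MR}$ as defined and verify $\bHH_m + \bMM_m^{\MR}$ equals the bracketed matrix, which it does by the definition~\eqref{eq:bMM_MR}. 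This yields~\eqref{eq:MR_LRM_form}.

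I expect the main obstacle to be the bookkeeping in the second step: carefully expanding $\bHHline_m^*\bHHline_m$ as a sum of Kronecker products and confirming that the ``extra'' terms beyond $\bHH_m^*\bHH_m$ really do collapse into the single clean product $\bHH_m^*(\JJ_m\otimes\II + \II\otimes\JJ_m)$ rather than leaving residual cross-terms. The risk is a mismatch between $\HH_m^*\JJ_m$-type terms and $\JJ_m\HH_m^*$-type terms; resolving it requires using that $\JJ_m = \vE_m H_{m+1,m}^* H_{m+1,m}\vE_m^*$ is Hermitian and that the off-diagonal Hessenberg block $H_{m+1,m}\vE_m^*$ interacts with $\IIline$ and $\vE_{m+1}$ through the three orthonormality identities. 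Once those identities are applied consistently, the algebra closes; everything else (the normal equations, cancelling the invertible $\bHH_m^*$, matching against~\eqref{eq:bMM_MR}) is routine. One should also note the subtlety that $\bHHline_m^*\vgline_m$ really does reduce to $\bHH_m^*\vg_m$ and not merely to something proportional to it — this is where the specific form $\vgline_m = \vec{\vEline_1\Gamma\Gamma^*\vEline_1^*}$ with $\vEline_1$ supported in the first $mr$ rows is essential.
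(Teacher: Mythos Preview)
Your overall plan is exactly the paper's: write the normal equations $\bHHline_m^* \bHHline_m \,\vy = -\bHHline_m^* \vgline_m$, simplify both sides, and cancel the invertible factor $\bHH_m^*$. Your reduction $\bHHline_m^* \vgline_m = \bHH_m^* \vg_m$ is correct. The gap is in your expansion of $\bHHline_m^* \bHHline_m$: the correct identity, which the paper derives directly, is
\[
\bHHline_m^* \bHHline_m \;=\; \bHH_m^* \bHH_m \;+\; \JJ_m \otimes \II \;+\; \II \otimes \JJ_m,
\]
\emph{without} the extra factor $\bHH_m^*$ you attach to the $\JJ_m$ terms. To see this, expand $\bHHline_m^* \bHHline_m$ into four Kronecker products and use $\IIline^*\IIline = I$, $\HHline_m^*\IIline = \HH_m^*$, $\IIline^*\HHline_m = \HH_m$, and $\HHline_m^*\HHline_m = \HH_m^*\HH_m + \JJ_m$. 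The result is $(\HH_m^*\HH_m + \JJ_m)\otimes\II + \HH_m^*\otimes\HH_m + \HH_m\otimes\HH_m^* + \II\otimes(\HH_m^*\HH_m + \JJ_m)$; the four $\HH_m$-only pieces reassemble into $\bHH_m^*\bHH_m$ and only $\JJ_m\otimes\II + \II\otimes\JJ_m$ remains. There are no terms of the form $\HH_m^*\JJ_m\otimes\II$ or $\HH_m^*\otimes\JJ_m$ --- the only source of $\JJ_m$ is $\HHline_m^*\HHline_m$, where it sits alone next to an identity.

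This is not cosmetic: with your expansion, cancelling $\bHH_m^*$ from the normal equations yields $(\bHH_m + \JJ_m\otimes\II + \II\otimes\JJ_m)\vy = -\vg_m$, which is \emph{not} \eqref{eq:MR_LRM_form} since $\bMM_m^{\MR}$ carries an explicit $\bHH_m^\cinv$. Your ``wait'' at the end is precisely where this inconsistency surfaces, and it cannot be resolved at that step. With the correct expansion the factor appears naturally: $\bHH_m^*\bHH_m + \JJ_m\otimes\II + \II\otimes\JJ_m = \bHH_m^*\big(\bHH_m + \bHH_m^\cinv(\JJ_m\otimes\II + \II\otimes\JJ_m)\big) = \bHH_m^*(\bHH_m + \bMM_m^{\MR})$, and cancelling $\bHH_m^*$ gives \eqref{eq:MR_LRM_form}.
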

\begin{proof}
    The Moore-Penrose inverse (and $\bHHline_m$ having full column rank) gives
    \begin{equation} \label{eq:bHHline_pseudoinv}
        \vy_m^{{\MR}} = - (\bHHline_m^* \bHHline_m)^\inv \bHHline_m^* \vgline_m.
    \end{equation}
    Expanding $\bHHline_m^* \bHHline_m$ results in
    \begin{align}
        \bHHline_m^* \bHHline_m
        & = (\HHline_m^* \otimes \IIline^* + \IIline^* \otimes \HHline_m^*) (\HHline_m \otimes \IIline + \IIline \otimes \HHline_m) \notag \\
        & = \HHline_m^* \HHline_m \otimes \IIline^* \IIline + \IIline^* \IIline \otimes \HHline_m^* \HHline_m + \HHline_m^* \IIline \otimes \IIline^* \HHline_m + \IIline^* \HHline_m \otimes \HHline_m^* \IIline \notag \\
        & = \HH_m^*\HH_m \otimes \II +  \II \otimes \HH_m^*\HH_m + \HH_m \otimes \HH_m^* + \HH_m^* \otimes \HH_m  + \JJ_m \otimes \II + \II \otimes \JJ_m \notag \\
        & = \bHH_m^* \bHH_m + \JJ_m \otimes \II + \II \otimes \JJ_m. \label{eq:bHHline*bHHline}
    \end{align}
    By \cite[Lemma~4.3.1]{HorJ91}, it follows that
    \begin{align}
        \bHHline_m^* \vgline_m
        & = \left( (\HHline_m^* \otimes \IIline^* ) + ( \IIline^* \otimes \HHline_m^* ) \right) \vec{\vEline_1 \Gamma \Gamma^* \vEline_1^*} \notag \\
        & = \vec{\IIline^* \vEline_1 \Gamma \Gamma^* \vEline_1^* \bar{\HHline}_m} + \vec{\HHline_m^* \vEline_1 \Gamma \Gamma^* \vEline_1^* \IIline} \notag \\
        & = \vec{\vE_1 \Gamma \Gamma^* \vE_1^* \bar{\HH}_m} + \vec{\HH_m^* \vE_1 \Gamma \Gamma^* \vE_1^*} \notag \\
        & = \left( (\HH_m^* \otimes \II ) + ( \II \otimes \HH_m^* ) \right) \vec{\vE_1 \Gamma \Gamma^* \vE_1^*} \notag \\
        & = \bHH_m^* \vg_m, \label{eq:bHHline*vgline}
    \end{align}
        where $\bar{\HH} = (\HH^*)^T$ denotes the the element-wise complex conjugate of $\HH$.  Combining \eqref{eq:bHHline*bHHline} and \eqref{eq:bHHline*vgline} with \eqref{eq:bHHline_pseudoinv} leads to the desired result:
    \begin{align*}
        \vy_m^{{\MR}}
        & = - \left( \bHH_m^* \bHH_m + \JJ_m \otimes \II + \II \otimes \JJ_m \right)^\inv \bHH_m^* \vg_m \\
        & = - \left( \bHH_m + \bHH_m^\cinv \big(\JJ_m \otimes \II + \II \otimes \JJ_m\big) \right)^\inv \vg_m \\
        & = - (\bHH_m + \bMM_m^{\MR})^\inv \vg_m.
    \end{align*}
\end{proof}

The modification matrices $\bMM_m^{\PMR}$ and $\bMM_m^{\MR}$ are indeed closely related.
\begin{lemma}\label{lem:PMR_to_MR}
    With the notation above, we have
    \begin{equation}
        \bMM_m^{\PMR} = \bMM_m^{\MR} + \bHH_m^\cinv (\HH_m^\cinv \JJ_m \otimes \HH_m^* + \HH_m^* \otimes \HH_m^\cinv \JJ_m).
    \end{equation}
\end{lemma}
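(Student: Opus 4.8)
The plan is a short, direct Kronecker-algebra computation; no structural insight is needed beyond the mixed-product rule and the nonsingularity of $\HH_m$. The first step is to clear the outer inverse: since $\HH_m$ is nonsingular, so is $\bHH_m = \HH_m \otimes \II + \II \otimes \HH_m$ (as noted before Proposition~\ref{prop:MR_LRM_form}), hence $\bHH_m^\cinv = (\bHH_m^*)^{-1}$ is well-defined and left-invertible. Left-multiplying the claimed identity by $\bHH_m^*$, it therefore suffices to prove
\[
    \bHH_m^* \bMM_m^{\PMR} = \big( \JJ_m \otimes \II + \II \otimes \JJ_m \big) + \big( \HH_m^\cinv \JJ_m \otimes \HH_m^* + \HH_m^* \otimes \HH_m^\cinv \JJ_m \big),
\]
because $\bHH_m^* \bMM_m^{\MR} = \JJ_m \otimes \II + \II \otimes \JJ_m$ by the definition~\eqref{eq:bMM_MR}, and then multiplying back by $\bHH_m^\cinv$ recovers the statement.

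Next I would expand the left-hand side. Writing $\bHH_m^* = \HH_m^* \otimes \II + \II \otimes \HH_m^*$ and $\bMM_m^{\PMR} = \HH_m^\cinv \JJ_m \otimes \II + \II \otimes \HH_m^\cinv \JJ_m$ from~\eqref{eq:bMM_PMR}, the mixed-product property $(A \otimes B)(C \otimes D) = AC \otimes BD$ produces four terms:
\[
    \bHH_m^* \bMM_m^{\PMR} = \HH_m^*\HH_m^\cinv \JJ_m \otimes \II \;+\; \HH_m^* \otimes \HH_m^\cinv \JJ_m \;+\; \HH_m^\cinv \JJ_m \otimes \HH_m^* \;+\; \II \otimes \HH_m^*\HH_m^\cinv \JJ_m .
\]
Using $\HH_m^* \HH_m^\cinv = \HH_m^* (\HH_m^*)^{-1} = \II$, the first and last terms collapse to $\JJ_m \otimes \II$ and $\II \otimes \JJ_m$, while the middle two are exactly the cross terms appearing in the claim. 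This matches the displayed target, so left-multiplying by $\bHH_m^\cinv$ completes the proof.

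There is essentially no obstacle here — the result is a bookkeeping identity — so the only thing to be careful about is notational consistency: keeping $\HH_m^\cinv$ meaning $(\HH_m^*)^{-1}$ throughout, tracking which tensor factor each conjugate transpose sits in, and explicitly invoking the hypothesis (nonsingular $\HH_m$) that legitimizes both $\HH_m^\cinv$ and $\bHH_m^\cinv$. As a remark, one could alternatively derive the same formula by combining $\bMM_m^{\MR} = \bHH_m^\cinv(\bHHline_m^*\bHHline_m - \bHH_m^*\bHH_m)$ from~\eqref{eq:bHHline*bHHline} with a parallel expansion of $\bHH_m^*\bMM_m^{\PMR}$, but the direct computation above is the shortest route.
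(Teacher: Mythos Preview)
Your proof is correct and follows essentially the same route as the paper: multiply $\bMM_m^{\PMR}$ on the left by $\bHH_m^*$, expand via the mixed-product rule into four terms, collapse two of them using $\HH_m^*\HH_m^\cinv=\II$, identify $\JJ_m\otimes\II+\II\otimes\JJ_m$ as $\bHH_m^*\bMM_m^{\MR}$, and then left-multiply by $\bHH_m^\cinv$. The paper's proof is just a terser version of exactly this computation.
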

\begin{proof}
	Multiplying $\bMM_m^{\PMR}$ and $\bHH_m^*$ gives
	\[
		\bHH_m^* \bMM_m^{\PMR}
		= \JJ_m \otimes \II + \II \otimes \JJ_m
		+ \HH_m^\cinv \JJ_m \otimes \HH_m^* + \HH_m^* \otimes \HH_m^\cinv \JJ_m.
	\]
	Inverting $\bHH_m^*$ immediately gives the desired result.
\end{proof}

Lemma~\ref{lem:PMR_to_MR} implies that whenever the term $\bHH_m^\cinv (\HH_m^\cinv\JJ_m \otimes \HH_m^* + \HH_m^* \otimes \HH_m^\cinv\JJ_m)$ is small, the PMR solution is close to the MR one. This is the case, for instance, when $\normF{\JJ_m} \ll 1$. However, another interesting scenario where $\bMM_m^{\PMR} \approx \bMM_m^{\MR}$ is described in the following theorem.

\begin{theorem}\label{thm:MR_PMR_distance}
     Let $\HH_m = \QQ_m \Lambda_m \QQ_m^{-1}$, $\Lambda_m = \diag(\lambda_1,\ldots,\lambda_{mr})$ and $\QQ_m\in \spC^{mr\times mr}$, be the eigendecomposition of $\HH_m$. Then 
    \begin{equation}
    	\normF{\bMM_m^{\MR}-\bMM_m^{\PMR}}^2
    	\leq  2m^3r^4 q_m \bar{c_m} \max_{i,j=1,\ldots,mr} \frac{(\abs{\lambda_j}^2 + \abs{\lambda_i}^2)^2}{\abs{\lambda_j}^2 \abs{\lambda_i}^2 |\lambda_i + \lambda_j|^2}, 
    \end{equation}
    where $\bar{c}_m> 0$, and $q_m = \normF{\QQ_m^\cinv \otimes \QQ_m^\cinv}^2$.
\end{theorem}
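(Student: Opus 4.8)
The plan is to start from Lemma~\ref{lem:PMR_to_MR}, which already exhibits $\bMM_m^{\MR}-\bMM_m^{\PMR}$ as $\bHH_m^\cinv$ times a sum of two Kronecker terms, and then to diagonalize everything that can be diagonalized, so that the ``conditioning'' factor $q_m$ and the ``eigenvalue quotient'' factor separate cleanly. The first step I would take is to peel the factor $\JJ_m$ off to the right in that expression, writing $\HH_m^\cinv\JJ_m\otimes\HH_m^* = (\HH_m^\cinv\otimes\HH_m^*)(\JJ_m\otimes\II)$ and $\HH_m^*\otimes\HH_m^\cinv\JJ_m = (\HH_m^*\otimes\HH_m^\cinv)(\II\otimes\JJ_m)$, so that
\[
\bMM_m^{\MR}-\bMM_m^{\PMR}
= -\bHH_m^\cinv(\HH_m^\cinv\otimes\HH_m^*)(\JJ_m\otimes\II)
  - \bHH_m^\cinv(\HH_m^*\otimes\HH_m^\cinv)(\II\otimes\JJ_m).
\]
This is the one genuinely structural move: $\bHH_m^\cinv$, $\HH_m^\cinv\otimes\HH_m^*$ and $\HH_m^*\otimes\HH_m^\cinv$ are all simultaneously diagonalized by $\QQ_m^\cinv\otimes\QQ_m^\cinv$ on the left and $\QQ_m^*\otimes\QQ_m^*$ on the right, whereas $\QQ_m$ does \emph{not} diagonalize $\JJ_m$, so the $\JJ_m$-factors must be split off before diagonalizing.

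Next I would substitute the eigendecomposition, using $\HH_m^*=\QQ_m^\cinv\Lambda_m^*\QQ_m^*$, $\HH_m^\cinv=\QQ_m^\cinv\Lambda_m^\cinv\QQ_m^*$, and $\bHH_m^\cinv=(\QQ_m^\cinv\otimes\QQ_m^\cinv)(\Lambda_m^*\otimes\II+\II\otimes\Lambda_m^*)^{-1}(\QQ_m^*\otimes\QQ_m^*)$. Multiplying out and using $(\QQ_m^*\otimes\QQ_m^*)(\QQ_m^\cinv\otimes\QQ_m^\cinv)=\II$, one gets $\bHH_m^\cinv(\HH_m^\cinv\otimes\HH_m^*)=(\QQ_m^\cinv\otimes\QQ_m^\cinv)\bm{D}_1(\QQ_m^*\otimes\QQ_m^*)$ and $\bHH_m^\cinv(\HH_m^*\otimes\HH_m^\cinv)=(\QQ_m^\cinv\otimes\QQ_m^\cinv)\bm{D}_2(\QQ_m^*\otimes\QQ_m^*)$ with $\bm{D}_1,\bm{D}_2$ \emph{diagonal} and, at the index pair $(i,j)$, $\abs{(\bm{D}_1)_{(i,j)}}=\abs{\lambda_j}/(\abs{\lambda_i}\abs{\lambda_i+\lambda_j})$ and $\abs{(\bm{D}_2)_{(i,j)}}=\abs{\lambda_i}/(\abs{\lambda_j}\abs{\lambda_i+\lambda_j})$. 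Pushing $\QQ_m^*\otimes\QQ_m^*$ through via $(\QQ_m^*\otimes\QQ_m^*)(\JJ_m\otimes\II)=(\QQ_m^*\JJ_m)\otimes\QQ_m^*$ and $(\QQ_m^*\otimes\QQ_m^*)(\II\otimes\JJ_m)=\QQ_m^*\otimes(\QQ_m^*\JJ_m)$, this yields
\[
\bMM_m^{\MR}-\bMM_m^{\PMR}
= -(\QQ_m^\cinv\otimes\QQ_m^\cinv)\big(\bm{D}_1 M_1+\bm{D}_2 M_2\big),
\quad M_1=(\QQ_m^*\JJ_m)\otimes\QQ_m^*,\ \ M_2=\QQ_m^*\otimes(\QQ_m^*\JJ_m),
\]
where $\normF{M_1}=\normF{M_2}=\normF{\QQ_m^*\JJ_m}\normF{\QQ_m}$.

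From here only elementary norm estimates remain. Submultiplicativity of $\normF{\cdot}$ pulls $\normF{\QQ_m^\cinv\otimes\QQ_m^\cinv}$ out front, whose square is precisely $q_m$. Applying $\normF{X+Y}^2\le 2\normF{X}^2+2\normF{Y}^2$ and then $\normF{\bm{D}_i M_i}\le\norm{\bm{D}_i}_2\normF{M_i}$ leaves the factors $\norm{\bm{D}_1}_2^2$ and $\norm{\bm{D}_2}_2^2$; each is a maximum over $(i,j)$ of an asymmetric quotient which, via $\abs{\lambda_j}^4\le(\abs{\lambda_i}^2+\abs{\lambda_j}^2)^2$ (and symmetrically for $\abs{\lambda_i}^4$), is dominated by $\max_{i,j}(\abs{\lambda_j}^2+\abs{\lambda_i}^2)^2/(\abs{\lambda_j}^2\abs{\lambda_i}^2\abs{\lambda_i+\lambda_j}^2)$, the quantity in the statement. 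What is left, $\normF{\QQ_m^*\JJ_m}^2\normF{\QQ_m}^2$, together with whatever dimensional constants one incurs by crudely bounding these in turn through $\norm{\QQ_m}_2$, $\norm{H_{m+1,m}}_2$ and identities such as $\normF{\II}=\sqrt{mr}$ and $\normF{\JJ_m}=\normF{H_{m+1,m}^*H_{m+1,m}}$, is what gets swept into the explicit factor $2m^3r^4$ and the (positive but otherwise unspecified) constant $\bar c_m$; when $\JJ_m=0$ the difference vanishes and the inequality holds trivially.

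I expect the main obstacle to be bookkeeping rather than any single hard estimate: correctly tracking which building blocks carry a conjugate (everything is $\HH_m^\cinv=(\HH_m^{-1})^*$, not $\HH_m^{-1}$, which is exactly what pins down the diagonals of $\bm{D}_1,\bm{D}_2$ and keeps the quotient in terms of $\abs{\lambda_i+\lambda_j}$), verifying that both asymmetric quotients are bounded by the single symmetric one, and choosing the submultiplicativity splits ($\normF{AB}\le\normF{A}\normF{B}$ vs.\ $\normF{AB}\le\norm{A}_2\normF{B}$) so that the constant comes out in the stated $2m^3r^4 q_m\bar c_m$ form rather than an unnecessarily larger one.
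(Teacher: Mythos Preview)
Your argument is correct but follows a genuinely different route from the paper's. The paper does \emph{not} invoke Lemma~\ref{lem:PMR_to_MR}; instead it works directly from the definitions~\eqref{eq:bMM_PMR} and~\eqref{eq:bMM_MR}, one column $\ve_p$ at a time, and unwinds each column via $\vec{\cdot}$ into an $mr\times mr$ matrix. The MR column becomes a Hadamard product $\LL_m\circ(\DD_m^T+\DD_m)$ with the Cauchy matrix $\LL_{ij}=(\bar\lambda_i+\bar\lambda_j)^{-1}$, while the PMR column becomes $\DD_m^T\Lambda_m^\cinv+\Lambda_m^\cinv\DD_m$, and the entry-wise difference is computed explicitly. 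The factor $2m^3r^4$ then arises organically: $\JJ_m=\vE_m H_{m+1,m}^*H_{m+1,m}\vE_m^*$ forces all but $mr^2$ columns of both $\bMM$ matrices to vanish, and each surviving column contributes $m^2r^2$ entries, with a further factor of $2$ from the max-entry bound. The constant is then $\bar c_m=\max_{\ell,k}\norm{\QQ_m^*\JJ_m\ve_\ell\ve_k^T\QQbar_m}_{\max}^2$.

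Your approach---factor $\JJ_m\otimes\II$ and $\II\otimes\JJ_m$ to the right, simultaneously diagonalize $\bHH_m^\cinv(\HH_m^\cinv\otimes\HH_m^*)$ and $\bHH_m^\cinv(\HH_m^*\otimes\HH_m^\cinv)$ by $\QQ_m^\cinv\otimes\QQ_m^\cinv$, then apply $\normF{\cdot}$ submultiplicativity and bound the two asymmetric diagonal quotients by the symmetric one---is shorter and avoids the entry-wise bookkeeping entirely. The price is that your $2m^3r^4$ does not emerge from structure; it is imposed after the fact by defining $\bar c_m$ to absorb $\normF{\QQ_m^*\JJ_m}^2\normF{\QQ_m}^2$ divided by whatever dimensional factor you want. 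Since the statement only asserts $\bar c_m>0$ this is legitimate, but the paper's proof gives $\bar c_m$ a concrete, interpretable form and explains why $m^3r^4$ is the right power, which your route does not.
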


\begin{proof}
Again applying \cite[Lemma~4.3.1]{HorJ91}, the $p$-th column of $\bMM^{\PMR}$ (cf.~\eqref{eq:bMM_PMR}) can be rewritten as
\begin{align*}
	\bMM_m^{\PMR} \ve_p
	= & (\QQ_m^\cinv \otimes \QQ_m^\cinv)(\Lambda_m^\cinv \QQ_m^*\JJ_m \otimes \QQ_m^*+ \QQ_m^* \otimes \Lambda_m^\cinv \QQ_m^*\JJ_m )\ve_p \\
	= & (\QQ_m^\cinv \otimes \QQ_m^\cinv)(\Lambda_m^\cinv \QQ_m^*\JJ_m \otimes \QQ_m^* + \QQ_m^* \otimes \Lambda_m^\cinv \QQ_m^*\JJ_m )\vec{\ve_\ell \ve_k^T} \\
	= & (\QQ_m^\cinv \otimes \QQ_m^\cinv) \vec{\QQ_m^* \ve_\ell \ve_k^T \JJ_m^T \QQbar_m \Lambda_m^\cinv + \Lambda_m^\cinv \QQ_m^*\JJ_m \ve_\ell \ve_k^T \QQbar},
\end{align*}
where $p = (\ell-1)mr + k$, for some $\ell,k \in \{1,\ldots,mr\}$\footnote{Notice the abuse of notation here: $\ve_p\in\mathbb{R}^{m^2r^2}$, whereas $\ve_\ell,\ve_k\in\mathbb{R}^{mr}$.}, and $\QQbar_m$ denotes the element-wise complex conjugate of $\QQ_m$.

Similarly for the $p$-th column of $\bMM_m^{\MR}$ (cf.~\eqref{eq:bMM_MR}):
\begin{align*}
	\bMM_m^{\MR} \ve_p
	= & (\QQ_m^\cinv \otimes \QQ_m^\cinv) (\Lambda_m \otimes \II + \II \otimes \Lambda_m)^\cinv(\QQ_m^*\JJ_m \otimes \QQ_m^* + \QQ_m^* \otimes \QQ_m^*\JJ_m )\ve_p \\
	= & (\QQ_m^\cinv \otimes \QQ_m^\cinv) (\Lambda_m \otimes \II + \II \otimes \Lambda_m)^\cinv(\QQ_m^*\JJ_m \otimes \QQ_m^* + \QQ_m^* \otimes \QQ_m^*\JJ_m )\vec{ \ve_\ell \ve_k^T}\\
	= & (\QQ_m^\cinv \otimes \QQ_m^\cinv) \vec{\LL_m \circ (\QQ_m^* \ve_\ell \ve_k^T \JJ_m^T \QQbar_m + \QQ_m^*\JJ_m \ve_\ell \ve_k^T \QQbar_m)},
\end{align*}
where $\LL_m$ is a Cauchy matrix whose $(i,j)$-th element is given by $\LL_{i,j} = (\bar{\lambda}_i + \bar{\lambda}_j)^\inv$, and $\circ$ denotes the Hadamard product.  Note that $\LL_m$ is well defined, due to $\HH_m$ being nonsingular.

Thanks to the expression above, we first notice that the indexes of some of the zero columns of $\bMM_m^{\MR}$ and $\bMM_m^{\PMR}$ coincide. Indeed, by recalling that $\JJ_m = \vE_mH_{m+1,m}^*H_{m+1,m}\vE_m^*$, $\JJ_m \ve_\ell \ve_k^T = 0$ if both $\ell$ and $k$ belong to $\{1,\ldots,(m-1)r\}$. 

As a consequence, $\bMM_m^{\PMR} \ve_p$ and $\bMM_m^{\MR} \ve_p$ may be different from the zero vector if and only if $p = (\ell-1)mr + k$ is such that either $\ell$ or $k$ is in $\{(m-1)r+1, \ldots, mr\}$.  We thus have to consider only these $mr^2$ columns for achieving an upper bound on the column-wise differences between $\bMM_m^{\PMR}$ and $\bMM_m^{\MR}$.  Let $\DD_m := \QQ_m^*\JJ_m \ve_\ell \ve_k^T \QQbar_m$ for either $\ell$ or $k \in \{(m-1)r+1, \ldots, mr\}$.  Then
\[
    \normF{\bMM_m^{\MR} \ve_p -\bMM^{\PMR}_m \ve_p}^2
	\leq\normF{\QQ_m^\cinv\otimes \QQ_m^\cinv}^2 \normF{\LL_m \circ (\DD_m^T + \DD_m) - (\DD_m^T \Lambda^\cinv + \Lambda^\cinv \DD_m)}^2,
\]
otherwise, we are sure that $\bMM_m^{\MR} \ve_p = \bMM_m^{\PMR} \ve_p=0$.

A direct inspection of the entries of $\LL_m \circ (\DD_m^T + \DD_m)$ and $\DD_m^T \Lambda^\cinv + \Lambda^\cinv \DD_m$ shows that 
\[
\ve_i^T (\LL_m \circ (\DD_m^T + \DD_m)) \ve_j
= \frac{(\DD_m)_{j,i} + (\DD_m)_{i,j}}{\bar{\lambda}_i + \bar{\lambda}_j},
\]
and
\[
\ve_i^T (\DD_m^T \Lambda^\cinv + \Lambda^\cinv \DD_m) \ve_j
= \frac{(\DD_m)_{j,i}}{\bar{\lambda}_i} + \frac{(\DD_m)_{i,j}}{\bar{\lambda}_j}.
\]
We thus have
\begin{align*}
	\MoveEqLeft[8] \normF{\LL_m \circ (\DD_m^T + \DD_m) - (\DD_m^T \Lambda^\cinv + \Lambda^\cinv \DD_m)}^2 \\
	& \hspace{-4em} = \sum_{i=1}^{mr} \sum_{j=1}^{mr} \abs{
	    \frac{(\DD_m)_{j,i} + (\DD_m)_{i,j}}{\bar{\lambda}_i + \bar{\lambda}_j}
	    - \frac{(\DD_m)_{j,i}}{\bar{\lambda}_i}
	    - \frac{(\DD_m)_{i,j}}{\bar{\lambda}_j}
	    }^2 \\
	& \hspace{-4em} = \sum_{i=1}^{mr} \sum_{j=1}^{mr} \abs{\frac
	    {\bar{\lambda}_j\bar{\lambda}_i( (\DD_m)_{j,i} +(\DD_m)_{i,j})-(\bar{\lambda}_i + \bar{\lambda}_j)(\bar{\lambda}_j(\DD_m)_{j,i} + \bar{\lambda}_i(\DD_m)_{i,j})}
	    {\bar{\lambda}_j \bar{\lambda}_i \cdot (\bar{\lambda}_i + \bar{\lambda}_j)}
	    }^2 \\
	& \hspace{-4em} \leq m^2 r^2 c_m(\ell,k) \max_{i,j=1,\ldots,mr}
	    \frac
	    { \abs{ 2\bar{\lambda}_j\bar{\lambda}_i - (\bar{\lambda}_i + \bar{\lambda}_j)^2 }^2 }
	    { \abs{ \bar{\lambda}_j\bar{\lambda}_i \cdot (\bar{\lambda}_i + \bar{\lambda}_j) }^2 }\\
	& \hspace{-4em} \leq m^2 r^2 c_m(\ell,k) \max_{i,j=1,\ldots,mr}
	    \frac
	    { \abs{ \bar{\lambda}_i^2 + \bar{\lambda}_j^2 }^2 }
	    { \abs{ \bar{\lambda}_j\bar{\lambda}_i \cdot (\bar{\lambda}_i + \bar{\lambda}_j) }^2 },
\end{align*}
where $c_m(\ell,k) = \norm{\DD_m}_{\max}^2$. Notice that we can drop the conjugation in the expression above thanks to the presence of the absolute value. Moreover, by recalling that~$\normF{\bMM_m^{\MR} \ve_p -\bMM^{\PMR}_m \ve_p} \neq 0$ for (at most) $mr^2$ columns, we have
\begin{align*}
	\normF{\bMM^{\MR}-\bMM^{\PMR}}^2
	= & \sum_{p=1}^{m^2r^2} \normF{\bMM^{\MR} \ve_p-\bMM^{\PMR} \ve_p}^2 \\
	\leq & 2m^3r^4 q_m \bar{c}_m \max_{i,j=1,\ldots,mr}
	    \frac
	    { \abs{ \lambda_i^2 + \lambda_j^2 }^2 }
	    { \abs{ \lambda_j\lambda_i \cdot (\lambda_i + \lambda_j) }^2 },
\end{align*}
where $\bar{c}_m = \max_{\ell,k} c_m(\ell,k)$, and $q_m = \normF{\QQ_m^\cinv \otimes \QQ_m^\cinv}$. 
\end{proof}

Theorem~\ref{thm:MR_PMR_distance}
shows that the distance between $\bMM_m^{\PMR}$ and $\bMM_m^{\MR}$ can be related to the magnitude of the function
\begin{equation} \label{eq:dist_func}
	f(x,y) := \frac{\abs{x^2+y^2}^2}{\abs{x y \cdot (x+y)}^2}, \quad x,y \in \spec(\HH_m).
\end{equation}
Since $\spec(\HH_m) \subseteq W(A)$, where $W(A):=\{z^*Az\in\mathbb{C},\;\|z\|=1\}$ is the field of values of $A$, one can compute an approximation of $W(A)$ and study $\max_{x,y} f(x,y)$ in $W(A)$. If this value is small, then we can expect the PMR and MR methods to achieve similar results.

\begin{corollary}\label{cor:MR_PMR_distance}
Let $A=A^*$. With the notation of Theorem~\ref{thm:MR_PMR_distance}, it holds
    \begin{equation}
    	\normF{\bMM_m^{\MR} - \bMM_m^{\PMR}}^2
    	\leq  2m^3r^4 \bar{c_m} \max_{i,j=1,\ldots,mr} \frac{(\lambda_j^2 + \lambda_i^2)^2}{\lambda_j^2 \lambda_i^2 (\lambda_i + \lambda_j)^2}. 
    \end{equation}
\end{corollary}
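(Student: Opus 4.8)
The plan is to specialize the argument of Theorem~\ref{thm:MR_PMR_distance} to the Hermitian case rather than redo the column-by-column estimate from scratch. The structural observation is that when $A = A^*$ the block Arnoldi relation~\eqref{eq:arnoldi} is a block Lanczos recurrence, so $\HH_m = \bVV_m^* A \bVV_m$ is Hermitian (and, being a compression of the negative-definite $A$, is itself negative definite, hence nonsingular). Therefore its eigendecomposition $\HH_m = \QQ_m \Lambda_m \QQ_m^{-1}$ may be taken with $\QQ_m$ \emph{unitary}, so that $\QQ_m^{-1} = \QQ_m^*$ and $\QQ_m^\cinv = \QQ_m$, and with $\Lambda_m = \diag(\lambda_1,\ldots,\lambda_{mr})$ real. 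Since the Ritz values $\lambda_i \in \spec(\HH_m) \subseteq W(A) \subset \spR$ are in fact real and negative, each sum $\lambda_i + \lambda_j$ is nonzero, so the Cauchy matrix $\LL_m$ appearing in the proof of Theorem~\ref{thm:MR_PMR_distance} is still well defined.

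With this in hand, I would re-examine the single place where the constant $q_m$ enters the proof of Theorem~\ref{thm:MR_PMR_distance}, namely the column-wise bound
\[
    \normF{\bMM_m^{\MR} \ve_p - \bMM_m^{\PMR} \ve_p}^2 \le \normF{\QQ_m^\cinv \otimes \QQ_m^\cinv}^2 \,\normF{\LL_m \circ (\DD_m^T + \DD_m) - (\DD_m^T \Lambda^\cinv + \Lambda^\cinv \DD_m)}^2 .
\]
This estimate comes from $\|Mv\|_2 \le \|M\|_2\,\|v\|_2 \le \normF{M}\,\|v\|_2$ applied to $M = \QQ_m^\cinv \otimes \QQ_m^\cinv$; but when $\QQ_m$ is unitary so is $M$, whence $\|M\|_2 = 1$ and the leading factor $\normF{\QQ_m^\cinv \otimes \QQ_m^\cinv}^2$ may simply be dropped (equivalently, $q_m$ is replaced by $1$). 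The remaining estimate of $\normF{\LL_m \circ (\DD_m^T + \DD_m) - (\DD_m^T \Lambda^\cinv + \Lambda^\cinv \DD_m)}^2$, together with the count of nonzero columns and the final summation over $p$, is carried over verbatim, the only change being that $\Lambda_m$ is now real: every complex-conjugate bar disappears, $\abs{\lambda_i}^2 = \lambda_i^2$, and $\abs{\lambda_i + \lambda_j}^2 = (\lambda_i + \lambda_j)^2$. Collecting terms then gives
\[
    \normF{\bMM_m^{\MR} - \bMM_m^{\PMR}}^2 \le 2 m^3 r^4 \bar{c}_m \max_{i,j = 1,\ldots,mr} \frac{(\lambda_j^2 + \lambda_i^2)^2}{\lambda_j^2 \lambda_i^2 (\lambda_i + \lambda_j)^2},
\]
which is exactly the claimed inequality.

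I do not expect a genuine obstacle here; the only subtlety worth flagging is that one should \emph{not} specialize by inserting the numerical value $\normF{\QQ_m^\cinv \otimes \QQ_m^\cinv}^2 = (mr)^2$ valid for a unitary $mr \times mr$ matrix $\QQ_m$, which would yield a needlessly weaker bound carrying a spurious extra factor $m^2 r^2$. Instead one must return to the step where $q_m$ was introduced as the crude over-estimate $\|\QQ_m^\cinv \otimes \QQ_m^\cinv\|_2 \le \normF{\QQ_m^\cinv \otimes \QQ_m^\cinv}$ and use the unitary invariance of the Euclidean norm to set that factor equal to $1$. Everything else in the corollary's proof is a transcription of the argument for Theorem~\ref{thm:MR_PMR_distance} with real eigenvalues.
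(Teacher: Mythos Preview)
Your proposal is correct and follows the same approach as the paper's own proof, which simply notes that $A=A^*$ makes $\HH_m$ Hermitian, so $\QQ_m$ is unitary and $\spec(\HH_m)$ is real. Your explicit observation that one must drop the factor $q_m$ by invoking unitary invariance of the Euclidean norm---rather than by evaluating $\normF{\QQ_m^\cinv\otimes\QQ_m^\cinv}^2$, which for a unitary $(mr)^2\times(mr)^2$ matrix equals $(mr)^2$, not $1$---is a subtlety the paper leaves implicit, and your treatment is more careful on this point.
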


\begin{proof}
     The proof follows from Theorem~\ref{thm:MR_PMR_distance} by noticing that having an Hermitian $A$ implies $\HH_m$ to be Hermitian so that $\QQ_m$ is unitary and $\spec(\HH_m)$ is real. 
\end{proof}

\subsection{A residual-minimizing Kronecker sum approach} \label{sec:kron_sum}
We introduce $\oplus$ to denote the \emph{Kronecker sum} between two matrices $B,C \in \spC^{p \times p}$:
\begin{equation} \label{eq:kronecker_sum_def}
    B \oplus C := B \otimes I_p + I_p \otimes C.
\end{equation}

Define $\bJJ_m := \JJ_m \oplus \JJ_m $. It follows from Proposition~\ref{prop:MR_LRM_form} and the definitions in \eqref{eq:bMM_PMR} that
\[
    \vy_m^{\MR} = -\left( \bHH_m + \bHH_m^\cinv \bJJ_m \right)^\inv \vg_m.
\]
and
\[
    \vy_m^{\PMR} = -\big( (\HH_m + \HH_m^\cinv \JJ_m) \oplus (\HH_m + \HH_m^\cinv \JJ_m) \big)^\inv \vg_m.
\]

It is tempting to conjecture that $(\HH_m + \HH_m^\cinv \JJ_m) \oplus (\HH_m + \HH_m^\cinv \JJ_m)$ is the closest Kronecker sum to $\bHH_m + \bHH_m^\cinv \bJJ_m$, under the Frobenius norm.  In particular, it is possible to find $\vM_{\NKS} \in \spC^{mr \times r}$, where NKS stands for ``nearest Kronecker sum", such that
\[
    \vy_m^{\NKS} = -\big( (\HH_m + \vM^{\NKS} \vE_m^*) \oplus (\HH_m + \vM^{\NKS} \vE_m^*) \big)^\inv \vg_m,
\]
and $\vM_{\NKS}$ minimizes the Frobenius norm of \eqref{eq:res_comp_mod}, thereby potentially achieving a much lower residual for the problem \eqref{eq:lyap} than the PMR approach.  More explicitly,
\begin{equation} \label{eq:opt_nks}
    \vM_{\NKS} := \argmin \normF{G_m(\vM)
        \begin{bmatrix}
                & I_r 	&		\\
            I_r & 		& -I_r	\\
                & -I_r 	&
        \end{bmatrix}
        G_m(\vM)^*}
\end{equation}
where $G_m(\vM) :=$
\[
    \begin{bmatrix}
        \vE_{m+1} H_{m+1,m}	 -\IIline \vec{\big( (\HH_m + \vM \vE_m^*) \oplus (\HH_m + \vM \vE_m^*) \big)^\inv \vg_m} \vE_m	& \IIline \vM
    \end{bmatrix},
\]
and
$\IIline := \begin{bmatrix} I_{mr} \\ 0 \end{bmatrix}$.

Unfortunately, this conjecture is not true, as demonstrated by the script\\ \texttt{test\_kron\_sum\_conj.m} in our toolbox \texttt{LowRank4Lyap}, described in further detail in Section~\ref{sec:num_ex}.  In this script, $A$ is a one-dimensional discretization of the Laplace operator, and the optimization problem \eqref{eq:opt_nks} is solved via a general unconstrained optimization routine \texttt{fminunc} with $\vM^{\PMR}$ as an initial guess.  See Figure~\ref{fig:test_kron_sum_conj} for a plot of the relative residuals alongside the relative difference $\frac{\norm{\vM^{\PMR} - \vM^{\NKS}}_2}{\norm{\vM^{\PMR}}_2}$ and Table~\ref{tab:test_kron_sum_conj} for the final iterations of the plot.  Although NKS, PMR, and MR appear to overlap, there are small visible differences when one zooms in.

\begin{figure}[t!]
    \begin{center}
        \small
        \begin{tabular}{cc}
             Full trajectory & Up to $i=49$ \\
             \includegraphics[width=.45\textwidth]{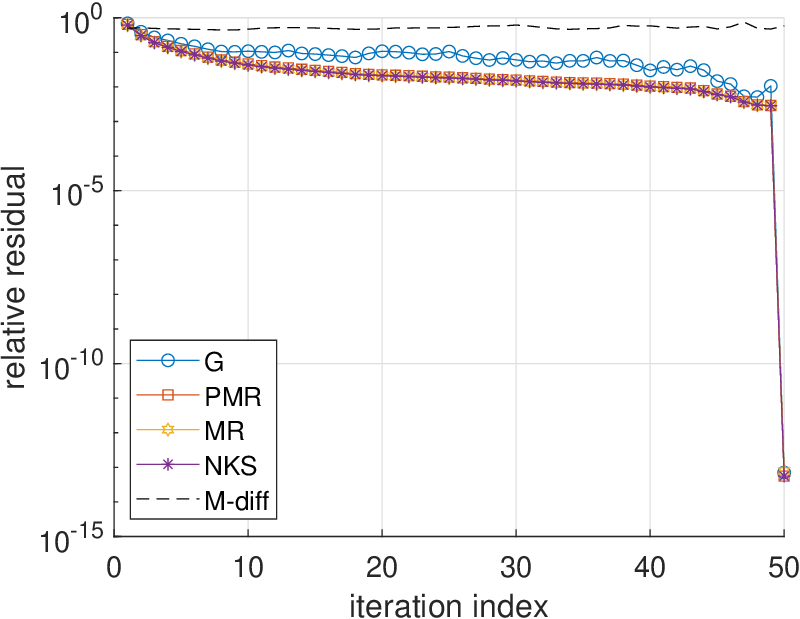} &
             \includegraphics[width=.45\textwidth]{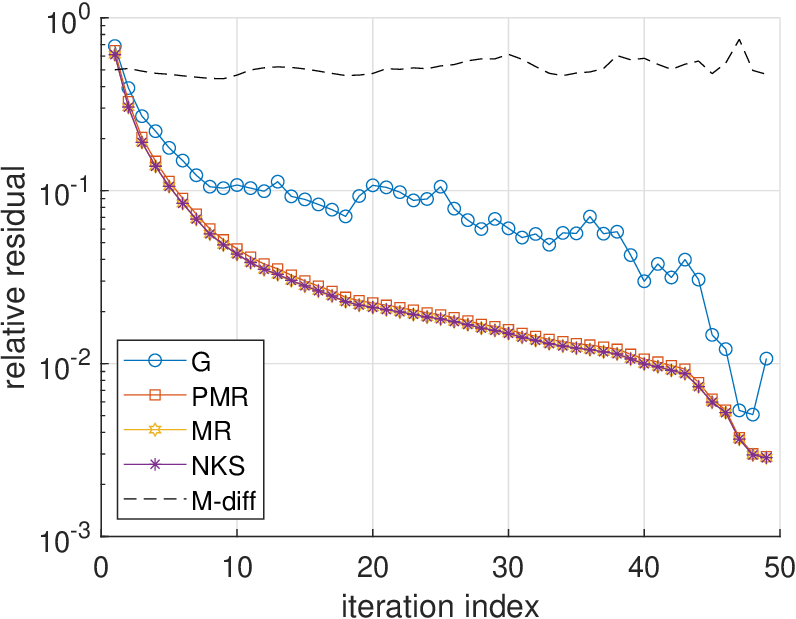}
        \end{tabular}
    \end{center}
    \caption{Relative residuals with respect to iteration index for various methods and $A$ as a 1D Laplacian.  ``M-diff" is $\frac{\norm{\vM^{\PMR} - \vM^{\NKS}}_2}{\norm{\vM^{\PMR}}_2}$. \label{fig:test_kron_sum_conj}}
\end{figure}

\begin{table}[tbp!]
    \centering
    \begin{tabular}{c|cccc}
           & Galerkin   & PMR        & MR         & NKS        \\\hline
        46 & 1.2120e-02 & 5.3767e-03 & 5.1983e-03 & 5.1996e-03 \\
        47 & 5.3607e-03 & 3.7351e-03 & 3.6560e-03 & 3.6565e-03 \\
        48 & 5.0699e-03 & 3.0104e-03 & 2.9629e-03 & 2.9632e-03 \\
        49 & 1.0686e-02 & 2.8980e-03 & 2.8543e-03 & 2.8548e-03 \\
        50 & 7.0699e-14 & 5.5865e-14 & 7.8170e-14 & 5.5956e-14 \\
    \end{tabular}
    \caption{Relative residuals for final iterations in Figure~\ref{fig:test_kron_sum_conj}. \label{tab:test_kron_sum_conj} }
\end{table}

Although the NKS approach fits the LRM framework, as it is currently implemented, it is not scalable in practice.  Furthermore, for Hermitian $A$, NKS and PMR appear to be very close, meaning that PMR may be sufficient for such scenarios.  We also examine NKS for a non-symmetric problem; see Example~\ref{ex:iss_n270_r3} in Section~\ref{sec:num_ex}.
\section{Compress-and-restart strategy} \label{sec:cr}
Following earlier work with Kressner and Massei~\cite{KreLMetal21}, we note that a compress-and-restart strategy is also viable for the LRM framework in Section~\ref{sec:lrm_framework}.  The key is to express the residual in a factored form with a ``core" matrix.  For the following, we drop the $\mbox{\normalfont\sffamily mod}$ superscript to allow space for restart cycle indices, expressed as $\k$, $k = 1, \ldots, k_{\max}$.  The cycle length and block size are denoted as $m_k$ and $r_k$, respectively, and in the case of redundancies, they may be dropped in favor of just the cycle index.

From Proposition~\ref{prop:res_lrm_galerkin}, recall that 
\begin{equation}
    R^\one := \bVV_{m_1+1} G^\one L^\one \big(G^\one)^* \bVV_{m_1+1}^* \in \spC^{n \times n},
\end{equation}
where for all $k \in \{1, \ldots, k_{\max}\}$
\begin{equation}
    L^\k := 
    \begin{bmatrix}
            & I_{r_k}   &   		\\
    I_{r_k} &           & -I_{r_k}	\\
	    	& -I_{r_k} 	&
	\end{bmatrix}.
\end{equation}
Denote $\vC^\two := \bVV_{m_1+1} G^\one \in \spC^{n \times r_2}$, where here $r_2 = 3r_1$.  To restart, we compute the next Krylov subspace $\spK_{m_2}(A, \vC^\two)$ and obtain the basis $\bVV_{m_2+1} \in \spC^{n \times (m_2+1) r_2}$ and block Hessenberg $\HHline_{m_2+1} \in \spC^{(m_2+1)r_2 \times m_2 r_2}$.  To compute an update to the solution from $k = 1$, we then solve the projected problem
\begin{equation} \label{eq:restart_k=2}
    \big(\HH_{m_2} + \vM^\two \vE_{m_2}^* \big) Y + Y \big(\HH_{m_2} + \vM^\two \vE_{m_2}^* \big)^* + \vE_1 \Gamma^\two L^\two \big( \Gamma^\two \big)^* \vE_1^* = 0,
\end{equation}
where $\vM^\two \in \spC^{m_2 r_2 \times r_2}$ is an LRM and here $\vE_1 \in \spR^{n \times m_2 r_2}$.  Letting $Y^\two$ denote solution to \eqref{eq:restart_k=2}, we can then compute $Z^\two := \bVV_{m_2} Y^\one \bVV_{m_2}^*$ and add it back to $X^\one$.  Doing this iteratively leads to a final solution of the form
\[
X = X^\one + \sum_{k=2}^{k_{\max}} Z^\k,
\]
where each $Z^\k = \bVV_{m_k} Y^\k \bVV_{m_k}^*$, $Y^\k$ is the solution of the projected problem
\begin{equation}
    \big(\HH_{m_k} + \vM^\k \vE_{m_k}^* \big) Y + Y \big(\HH_{m_k} + \vM^\k \vE_{m_k}^* \big)^* + \vE_1 \Gamma^\k L^\k \big( \Gamma^\k \big)^* \vE_1^* = 0,
\end{equation}
$\bVV_{m_k}^* \vC^\k = \vE_1 \Gamma^\k$,
\[
    \vC^\k := \bVV_{m_\kless+1} G^\kless \in \spC^{n \times r_\kless},
\]
and
\[
    G^\k :=
    \begin{bmatrix}
        \vE_{m_k+1} H_{m_k+1,m_k}	& \IIline Y^\k \vE_{m_k}	& \IIline \vM^\k
    \end{bmatrix}
    \in \spC^{n \times 3r_k}.
\]
Although the residual at each iteration can be computed as in Proposition~\ref{prop:res_lrm_galerkin},
\begin{equation}
    R^\k := \bVV_{m_k+1} G^\k L^\k \big(G^\k)^* \bVV_{m_k+1}^* \in \spC^{n \times n},
\end{equation}
one clear downside to this approach is that the rank of $R^\k$ triples each restart cycle.  As discussed in more detail in \cite{KreLMetal21}, we therefore need a compression strategy of the starting vector $\vC^\k$ from one cycle to the next.  We do this in a symmetric fashion via \cite[Algorithm~3]{KreLMetal21}.  We also employ a $\memmax$ parameter that dictates how many column vectors can be stored per cycle and toggle the block size $r_k$ and maximum basis size $m_k$ according per cycle.  See \cite[Algorithm~4]{KreLMetal21} for more details.
\section{Numerical results}\label{Numerical results} \label{sec:num_ex}
All numerical tests were written and run in MATLAB 2022a and can be found in the repository \texttt{LowRankMod4Lyap}\footnote{\url{https://gitlab.com/katlund/LowRankMod4Lyap}} hosted on GitLab.  Every test was run on a single, standard node of the compute cluster Mechthild\footnote{\url{https://www.mpi-magdeburg.mpg.de/cluster/mechthild}}, housed at the Max Planck Institute for Dynamics of Complex Technical Systems in Magdeburg, Germany.  A standard node consists of two Intel Xeon Silver 4110 (Skylake) CPUs, each with 8 cores, 64KB L1 cache, and 1024KB L2 cache at a clockrate of 2.1 GHz, as well as 12 MB of shared L3 cache.  We set \texttt{maxNumCompThread} in MATLAB to $4$.

We consider a wide variety of numerical tests in this section.  In Section~\ref{sec:conv_behavior} we compare the convergence results among the G, MR, and PMR methods for both Hermitian and non-Hermitian matrices $A$ and plot the bound from Theorem~\ref{thm:MR_PMR_distance} for problems with Hermitian $A$.  We study the performance of these methods for varying ranks $r$ in Section~\ref{sec:vary_r}.  We see how the compress-and-restart strategy works for PMR compared to G in Section~\ref{sec:cr_num}.

Test matrices have either been generated by our own code or taken from the SuiteSparse Matrix Collection \cite{DavH11} (and in particular, originally from the Oberwolfach Benchmark Collection \cite{KorR05}) or the SLICOT Benchmark Collection \cite{ChaV05}.\footnote{The collection can currently be found at \url{https://github.com/SLICOT/Benchmark-ModelReduction}.}  We provide descriptions for each matrix below:
\begin{itemize}
    \item \badconddiag: $A$ is diagonal matrix with logarithmically spaced values ranging from $1$ to $10^{12}$.
    \item \convdiff: central finite differences stencil of three-dimensional convection-diffusion operator.  The matrix $A$ has size $n=N^3$, where $N$ is the number of discretization points in each direction.  The parameter $\eps$ controls the viscosity-dominance, whereby smaller $\eps$ correlates with low viscosity and high convection.
    \item \laplacian: central finite differences stencil of the two-dimensional Laplacian operator.  The matrix $A$ has size $n=N^2$, where $N$ is the number of discretization points in each direction.
    \item \logdiag: Example 4 from \cite{PalS20}; $A$ is nonnormal but diagonalizable with logarithmically spaced eigenvalues from $1$ to $100$.
    \item \rail: a symmetric, heat transfer, steel profile cooling matrix of size $1357 \times 1357$; $\vC$ is determined by the problem; in the Oberwolfach collection.
    \item \iss: component 1r of the international space station problem, nonsymmetric; $\vC$ is determined by the problem; in the SLICOT collection.
\end{itemize}
Unless otherwise noted, the constant term $\vC \vC^*$ is built from uniformly distributed random numbers (i.e., \texttt{rand} in MATLAB).

\subsection{Convergence behavior} \label{sec:conv_behavior}
For all examples in this section, we restrict ourselves to the non-restarted version of each algorithm.  In addition to the convergence results, we also plot the eigenvalues of the solution $Y_m$ to the small problem \eqref{eq:lyap_projected}, \eqref{eq:lyap_projected_MR}, or \eqref{eq:lyap_mod_galerkin}, as well as a Ritz-value function based on the bound from Theorem~\ref{thm:MR_PMR_distance},
\begin{equation} \label{eq:ritz_val_func}
    f(x,y) := \frac{\abs{x^2+y^2}^2}{\abs{x y \cdot (x+y)}^2},
\end{equation}
where the coefficients have been dropped.  In each example, the function is evaluated on the spectrum of $\HH_m$.  All algorithms are halted after surpassing a relative residual of $10^{-6}$.

\begin{example} \label{ex:log_diag_rand_n1000_r3}
    For the first example, we consider a \logdiag problem with $n = 1000$ and $r = 3$. Notice that, in this example, $A$ has field of values in the right half of the complex plane $\spC^{+}$, so the exact solution $X$ to~\eqref{eq:lyap} is negative semi-definite. 
    
    The results are shown in Figure~\ref{fig:log_diag_rand} and all methods perform similarly. However, MR suffers from a slightly positive solution eigenvalue.  Neither the Galerkin nor PMR approaches produce positive eigenvalues as expected from our theoretical analysis.  The Ritz-valued function shows that the difference between PMR and MR should be relatively small, confirming what we see for the residual behavior.
\end{example}
\begin{figure}[htbp!]
    \begin{center}
        \small
        \begin{tabular}{cc}
            Residual & Solution spectra \\
            \includegraphics[width=.45\textwidth]{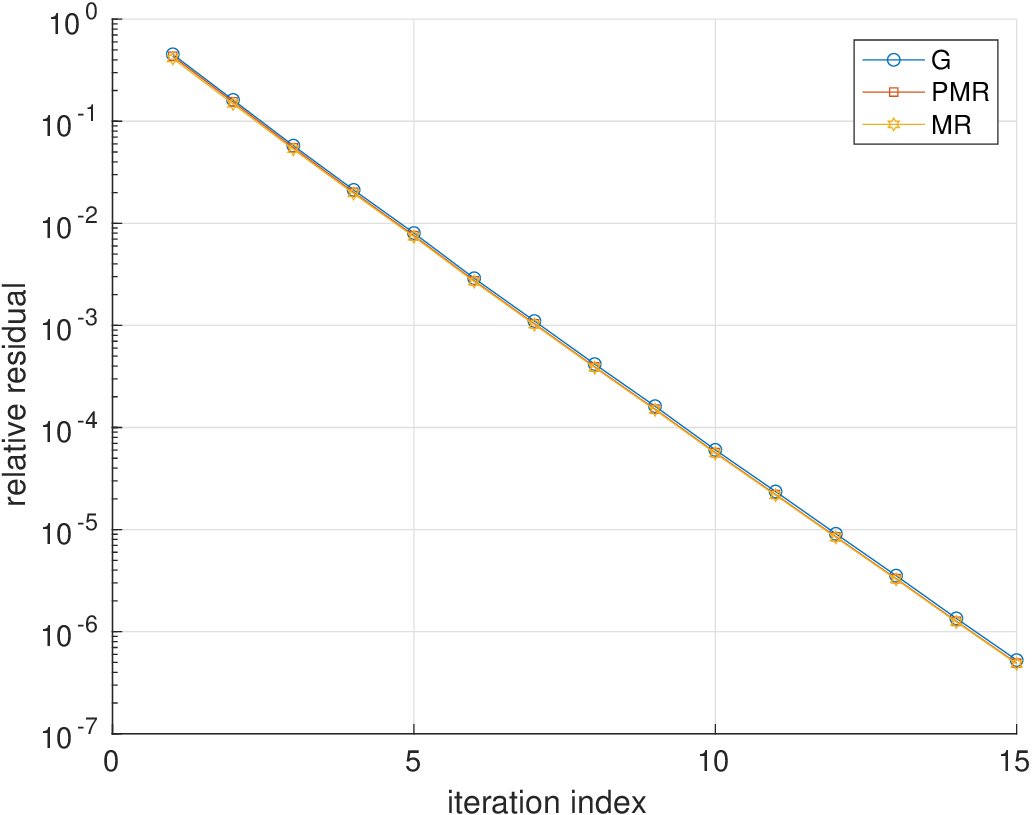} &
            \includegraphics[width=.45\textwidth]{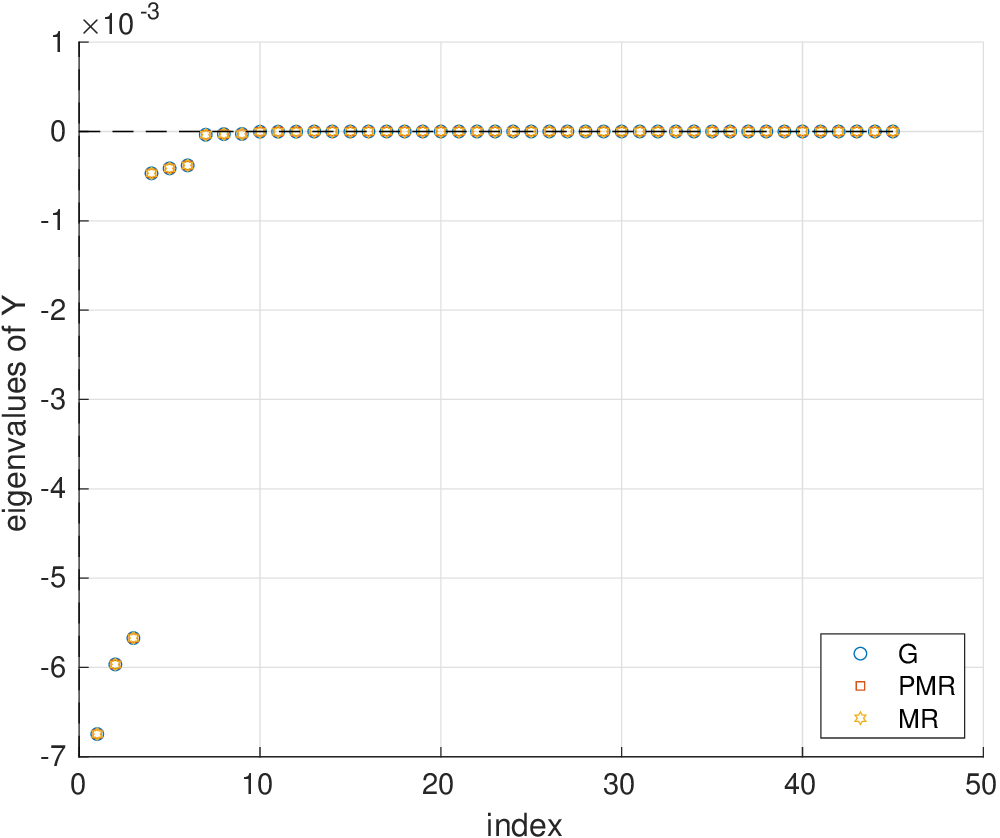}
        \end{tabular}
        \begin{tabular}{c}
            Ritz-value function \\
            \includegraphics[width=.45\textwidth]{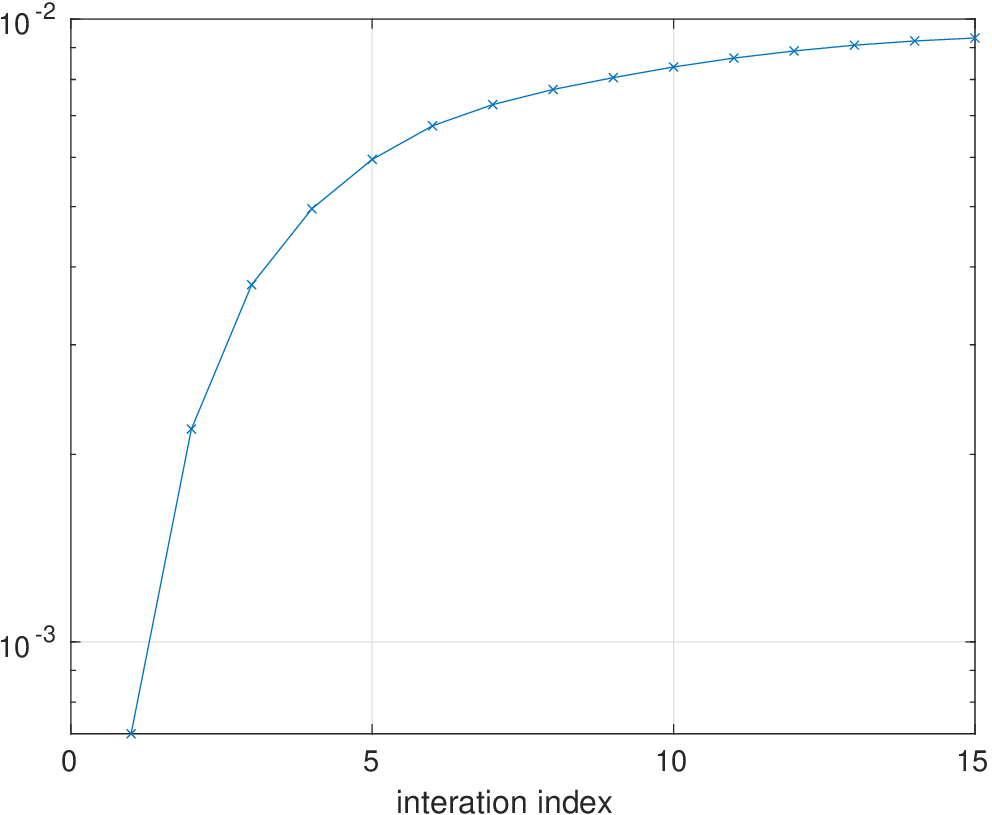}
        \end{tabular}
    \end{center}
    \caption{Convergence results for Example~\ref{ex:log_diag_rand_n1000_r3}. \label{fig:log_diag_rand}}
\end{figure}

\begin{example} \label{ex:bad_cond_diag_rand_n500_r3}
    We now consider a \badconddiag problem with $n = 500$ and $r = 3$, with results shown in Figure~\ref{fig:bad_cond_diag_rand}.  Both MR and PMR achieve nearly the same relative residual in the early iteration and improve over the Galerkin approach until later residuals, at which point all methods overlap.  The cluster of eigenvalues of $A$ near zero poses numerical challenges for all algorithms, leading to some positive eigenvalues in $Y_m$.  However, the PMR approach minimizes the positive eigenvalues the best with the largest having magnitude $\bigO{10^{-16}}$, while that of the Galerkin approach is $\bigO{10^{-11}}$ and that of MR is $\bigO{10^{-7}}$.  As for the Ritz-value function, it remains relatively small until near convergence, at which point $\JJ_m$ is close to zero, causing a spike.
\end{example}
\begin{figure}[htbp!]
    \begin{center}
        \small
        \begin{tabular}{cc}
            Residual & Solution spectra \\
            \includegraphics[width=.45\textwidth]{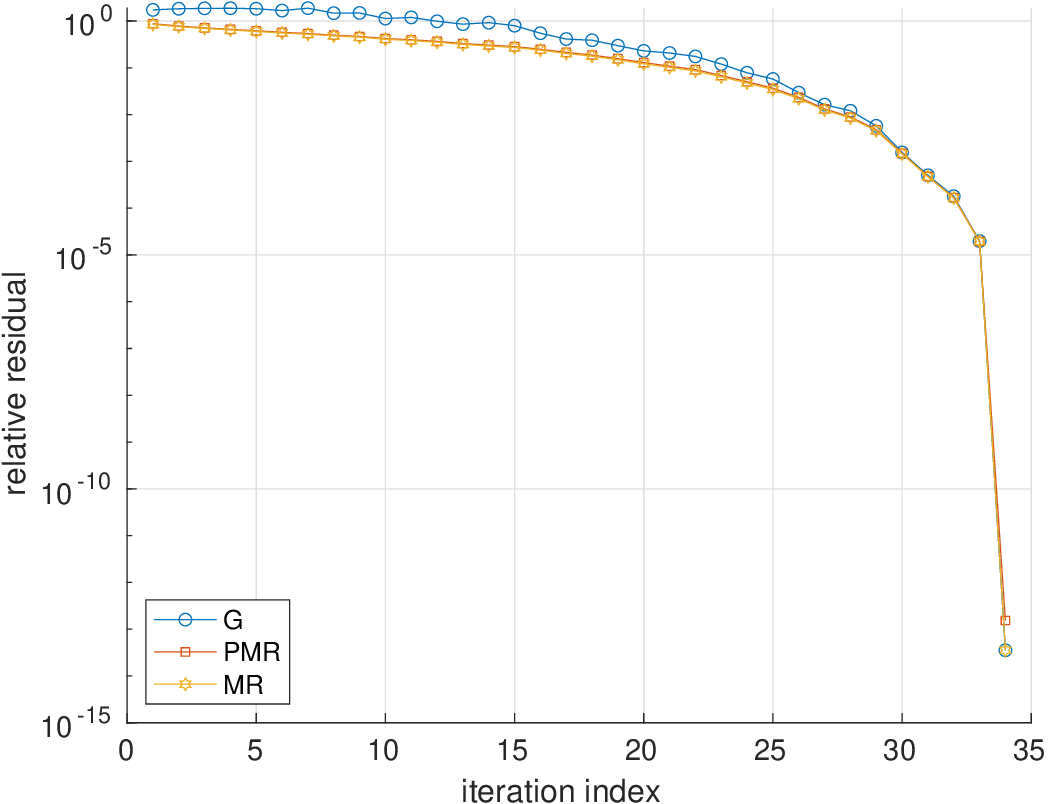} &
            \includegraphics[width=.45\textwidth]{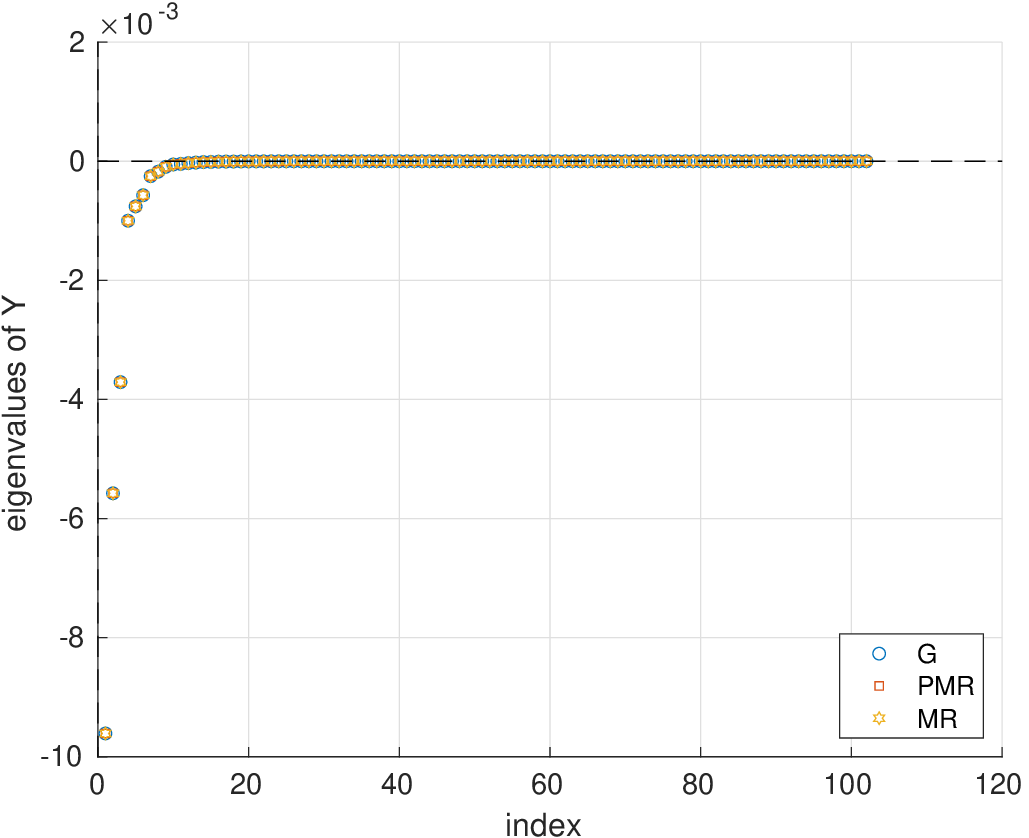}
        \end{tabular}
        \begin{tabular}{c}
            Ritz-value function \\
            \includegraphics[width=.45\textwidth]{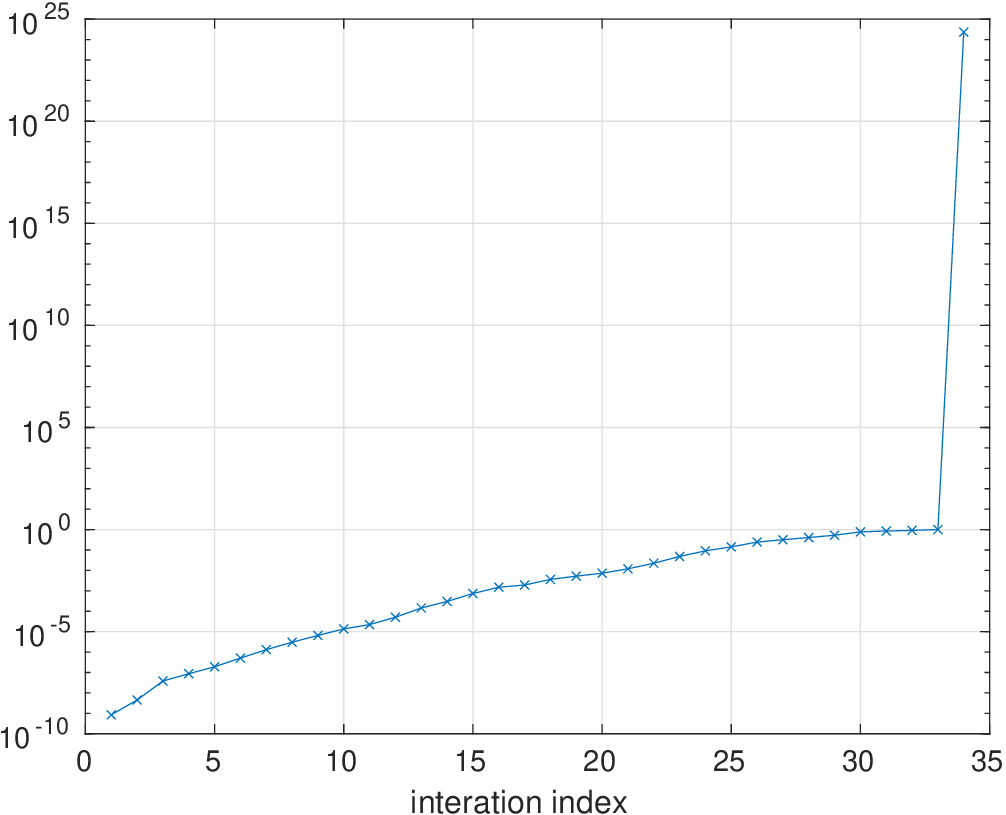}
        \end{tabular}
    \end{center}
    \caption{Convergence results for Example~\ref{ex:bad_cond_diag_rand_n500_r3}. \label{fig:bad_cond_diag_rand}}
\end{figure}

\begin{example} \label{ex:laplacian_2d_rand_n100_r3}
    We begin to see a clearer advantage of the PMR approach for the \laplacian problem with $n = 10^4$ and $r = 3$ in Figure~\ref{fig:laplacian_2d_rand}.  The PMR method overlaps with MR very closely and reaches $10^{-6}$ 10 iterations before the Galerkin method.  As for the solution spectra, there are fewer eigenvalues for MR and PMR due to the earlier convergence, but we can see that all methods closely approximate the same spectra overall.  The Ritz-value function also remains relatively small for all iterations.
\end{example}
\begin{figure}[htbp!]
    \begin{center}
        \small
        \begin{tabular}{cc}
            Residual & Solution spectra \\
            \includegraphics[width=.45\textwidth]{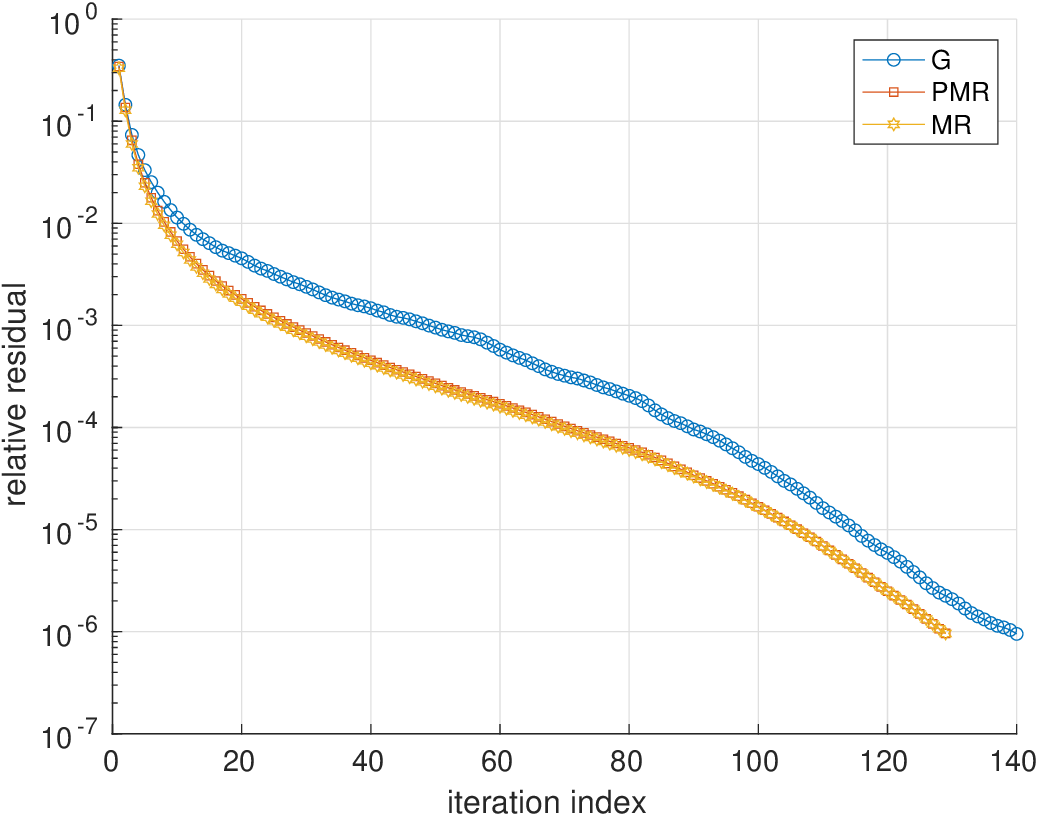} &
            \includegraphics[width=.45\textwidth]{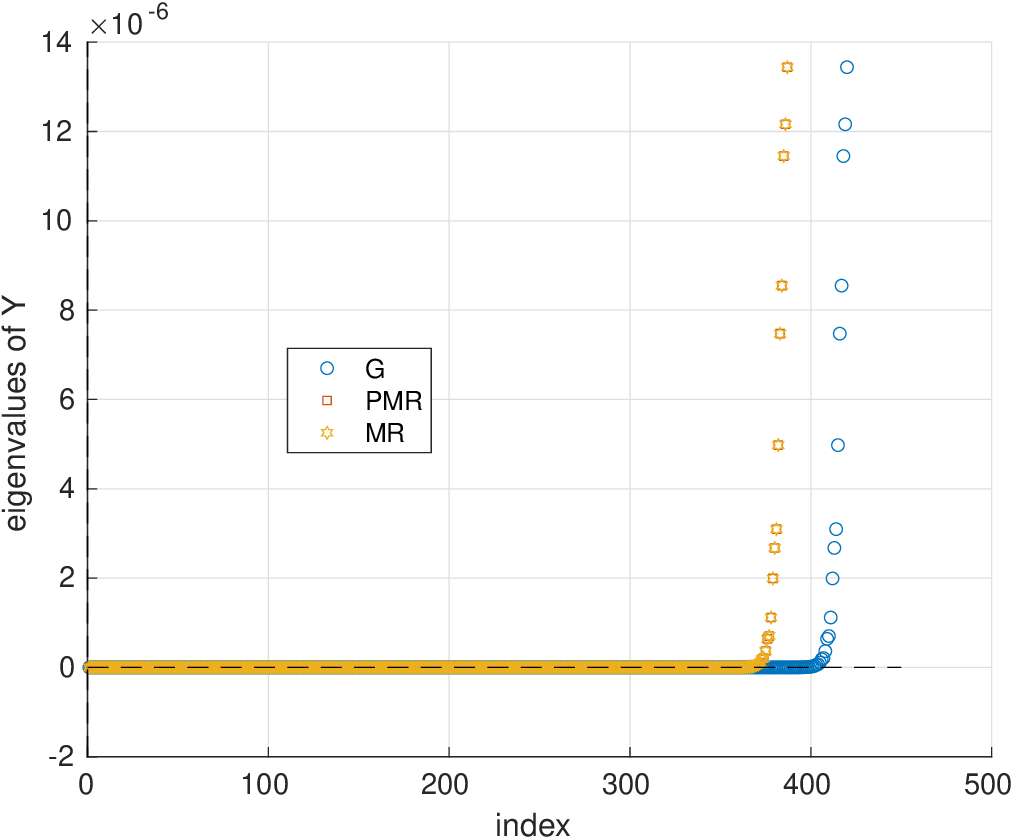}
        \end{tabular}
        \begin{tabular}{c}
            Ritz-value function \\
            \includegraphics[width=.45\textwidth]{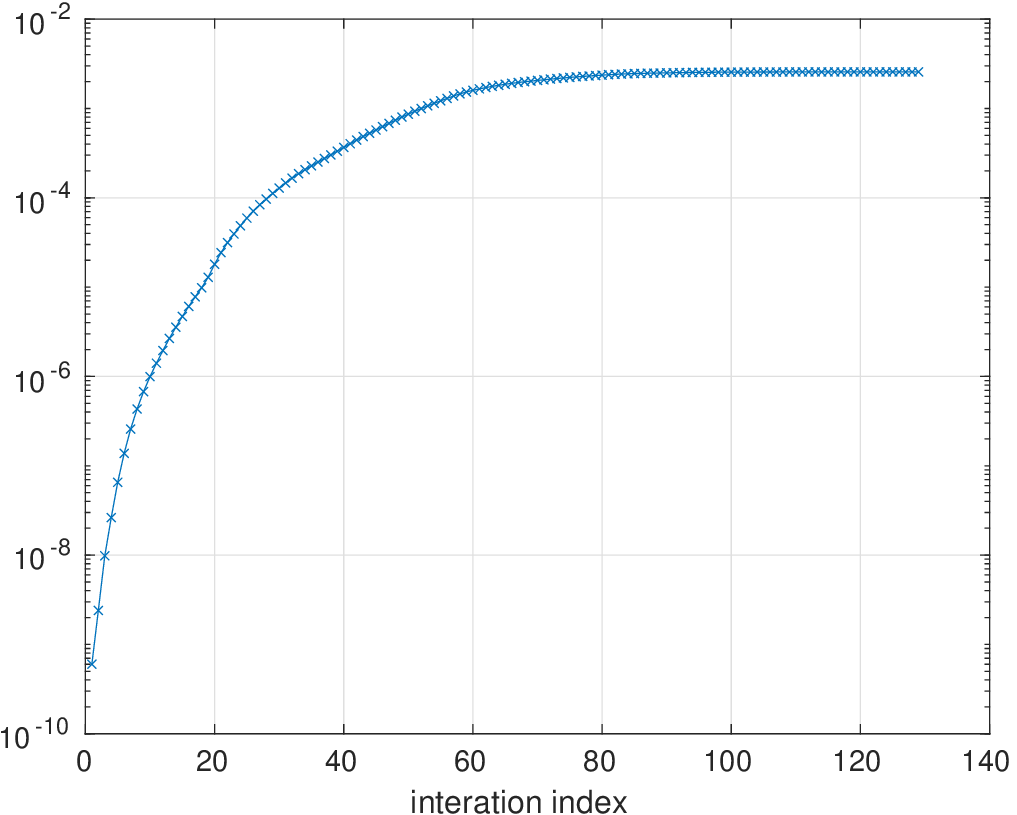}
        \end{tabular}
    \end{center}
    \caption{Convergence results for Example~\ref{ex:laplacian_2d_rand_n100_r3}. \label{fig:laplacian_2d_rand}}
\end{figure}

\begin{example} \label{ex:conv_diff_3d_rand_n25_r3}
    The \convdiff problem, with $n = 25^3$ and $r = 3$ allows us to see the effect nonnormality has on the behavior of each method.  For a problem with strong diffusion ($\eps = 10^{-2}$, left in Figure~\ref{fig:conv_diff_3d_rand}), we see that PMR overlaps with MR relatively well and converges one iteration before the Galerkin method.  As diffusion becomes weaker ($\eps = 10^{-3}$, right in Figure~\ref{fig:conv_diff_3d_rand}), PMR drifts from MR and is unable to capture its more drastic reduction in iterations (9 fewer vs. 3 fewer, for MR and PMR, respectively).  This confirms our intuition that PMR better approximates MR for symmetric matrices $A$.
\end{example}
\begin{figure}[htbp!]
    \begin{center}
        \small
        \begin{tabular}{cc}
            Residual, $\epsilon = 10^{-2}$ & Residual, $\epsilon = 10^{-3}$ \\
            \includegraphics[width=.45\textwidth]{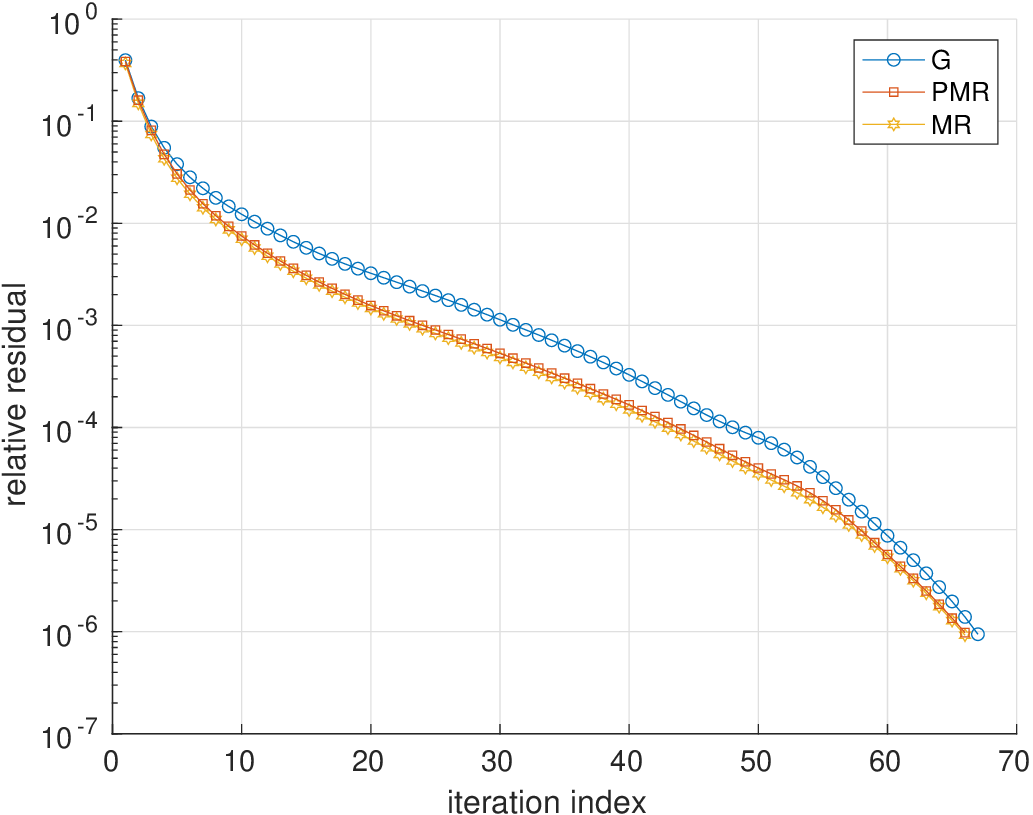} &
            \includegraphics[width=.45\textwidth]{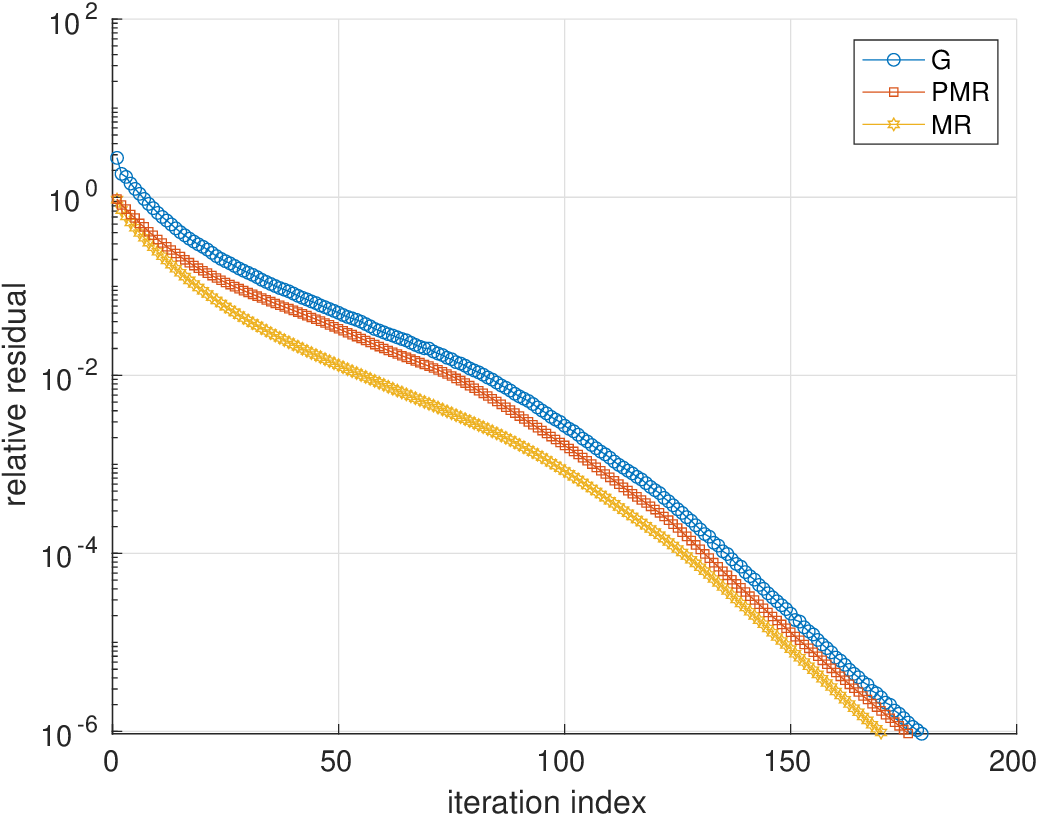}
        \end{tabular}
    \end{center}
    \caption{Convergence results for Example~\ref{ex:conv_diff_3d_rand_n25_r3}. \label{fig:conv_diff_3d_rand}}
\end{figure}

\begin{example} \label{ex:rail_1357_n1357_r7}
    With the \rail problem, we get a look at a more practical application.  Although not reported in the plots in Figure~\ref{fig:rail_1357}, we first note that the Galerkin and PMR methods both took about 2 minutes, while MR needed more than 35 minutes, to achieve the same relative residual tolerance.  We can also clearly see that the Galerkin method does not produce a monotonic residual, while PMR follows the minimal residual of MR very closely.  In terms of iteration counts, there is practically no improvement over the Galerkin method, but residual monotonicity for the same computational cost, is incredibly useful in practice.  The Ritz-value function is, however, counter-intuitive, given how well PMR visually overlaps with MR.  This suggests that a more precise metric might be needed to better measure when PMR could be relied on to approximate MR.  No method produces erroneous solution eigenvalues.
\end{example}
\begin{figure}[htbp!]
    \begin{center}
        \small
        \begin{tabular}{cc}
            Residual & Solution spectra \\
            \includegraphics[width=.45\textwidth]{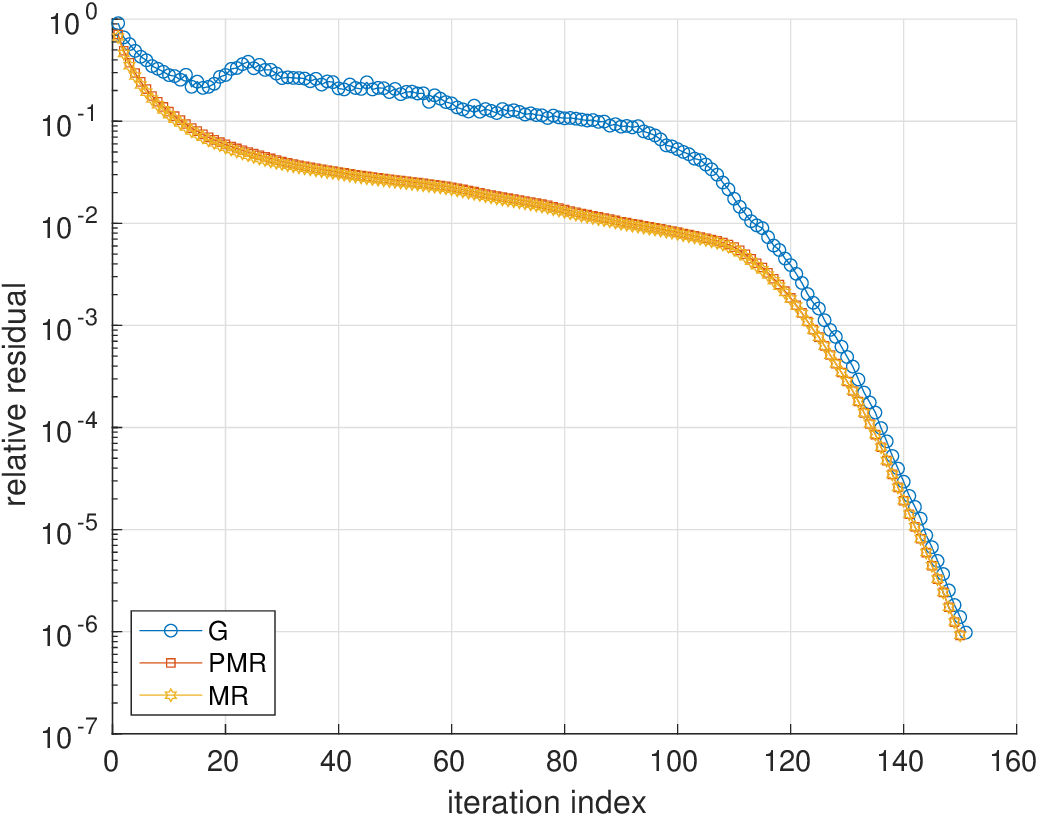} &
            \includegraphics[width=.45\textwidth]{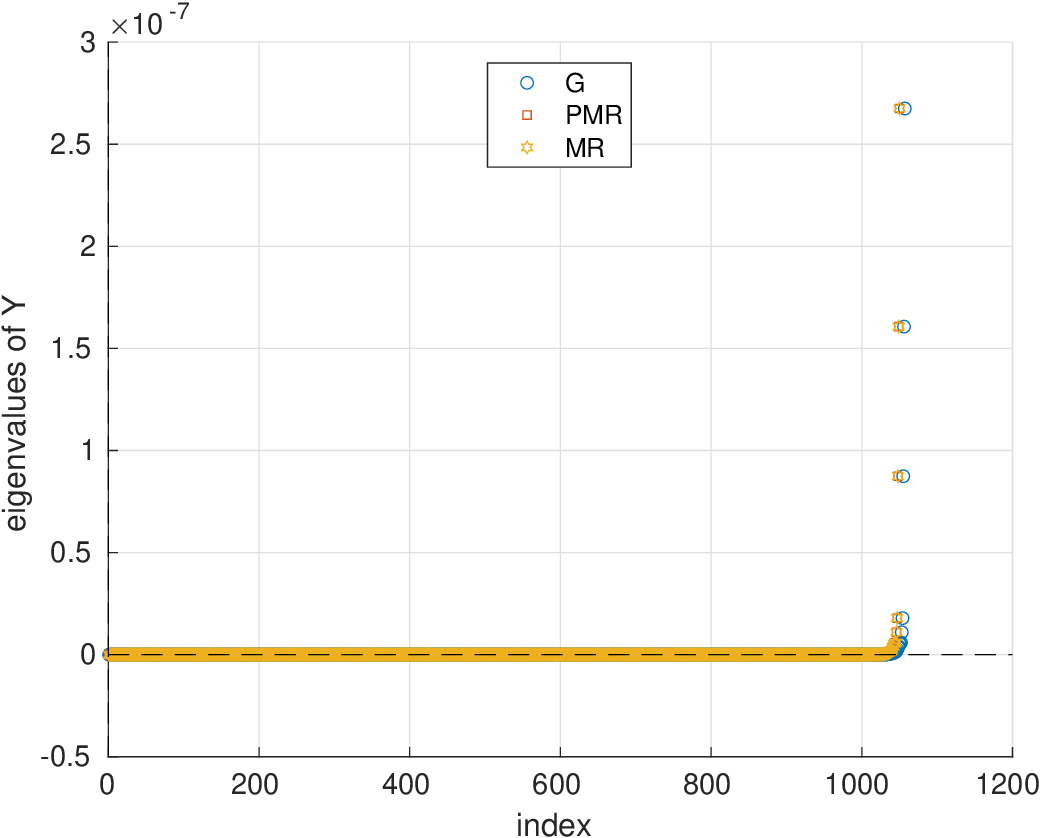}
        \end{tabular}
        \begin{tabular}{c}
            Ritz-value function \\
            \includegraphics[width=.45\textwidth]{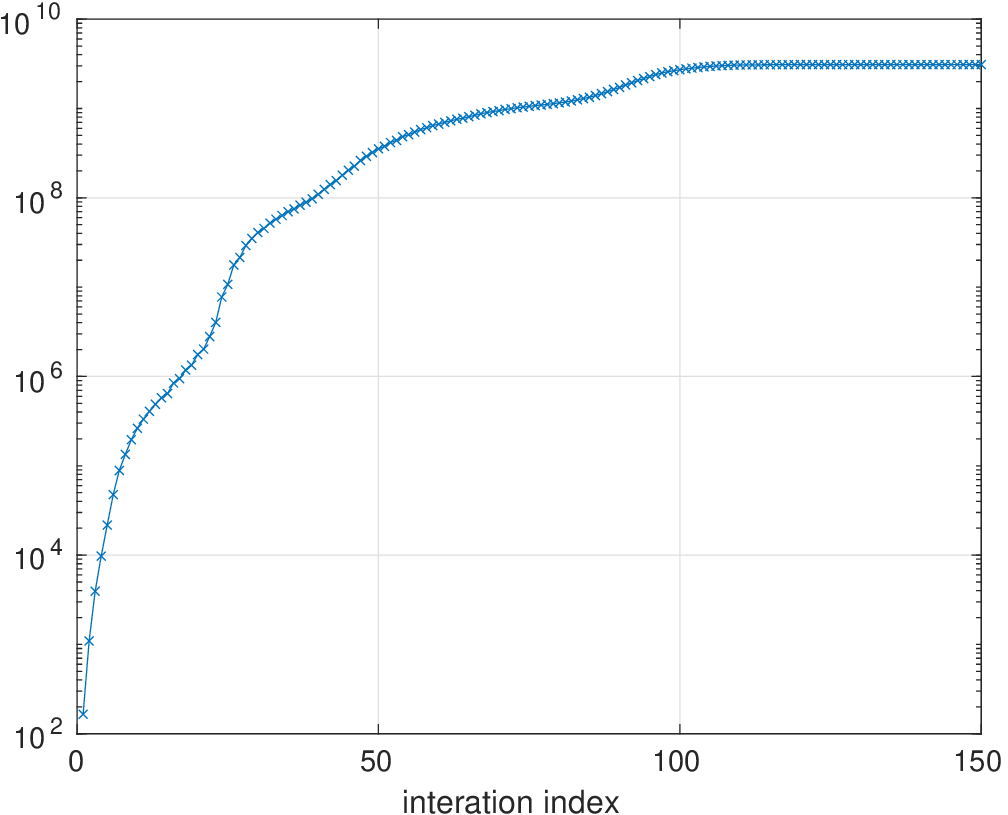}
        \end{tabular}
    \end{center}
    \caption{Convergence results for Example~\ref{ex:rail_1357_n1357_r7}. \label{fig:rail_1357}}
\end{figure}

\begin{example} \label{ex:iss_n270_r3}
    For the final example in this section, we demonstrate how badly PMR can behave when $A$ comes from the \iss problem and is far from symmetric.  Although PMR is in some ways less bad than Galerkin, both are far from monotonic, especially with respect to the MR approach.  We also compare both methods with NKS as described in Section~\ref{sec:kron_sum}.  As implemented, NKS is far from practical, because an optimization problem has to be solved each iteration.  However, it is clear that it achieves a much lower residual with fewer large jumps.
\end{example}
\begin{figure}[htbp!]
    \begin{center}
        \small
        \begin{tabular}{c}
            Residual \\
            \includegraphics[width=.65\textwidth]{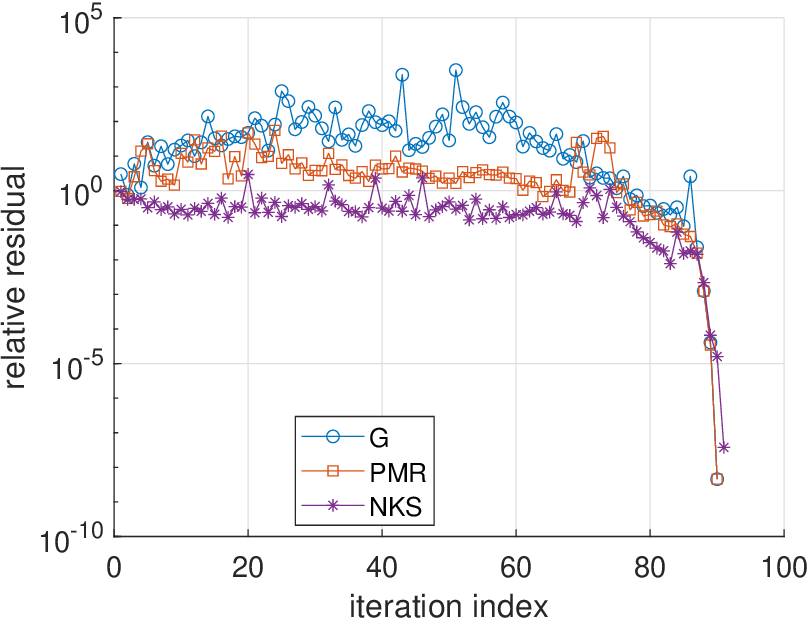}
        \end{tabular}
    \end{center}
    \caption{Convergence results for Example~\ref{ex:iss_n270_r3}. \label{fig:iss}}
\end{figure}

\subsection{Performance relative to block size} \label{sec:vary_r}
We now examine how effective each approach is relative to varying rank sizes.  We consider just the \laplacian and \convdiff problems.  All results are presented as heatmaps scaled by the results from the Galerkin approach, to facilitate comparisons.  We consider $r \in \{1, 2, 4, 8, 16\}$.  The blue-green heatmaps display timings ratios, while the orange-pink ones display a ratio of iteration counts.  All tests are run on a standard node of Mechthild as described at the beginning of Section~\ref{sec:num_ex}.

\begin{example} \label{ex:vary_r_laplacian_2d_rand_n100}
    Results for the \laplacian problem can be found in Figure~\ref{fig:vary_r_laplacian_2d_rand_n100}.  For small $r$, both MR and PMR appear to have a strong advantage over the Galerkin approach in terms of both timings and iteration counts.  In fact, MR and PMR always have fewer iteration counts.  However, MR clearly begins to suffer for $r = 4$, and its overall timings become increasingly worse as $r$ increases.  PMR in contrast remains competitive, always requiring fewer iterations than Galerkin and being slightly faster in terms of timings, although the gains decrease as $r$ increases.
\end{example}
\begin{figure}[htbp!]
    \begin{center}
        \small
        \begin{tabular}{cc}
            Timings ratio & Iteration counts ratio \\
            \includegraphics[width=.45\textwidth]{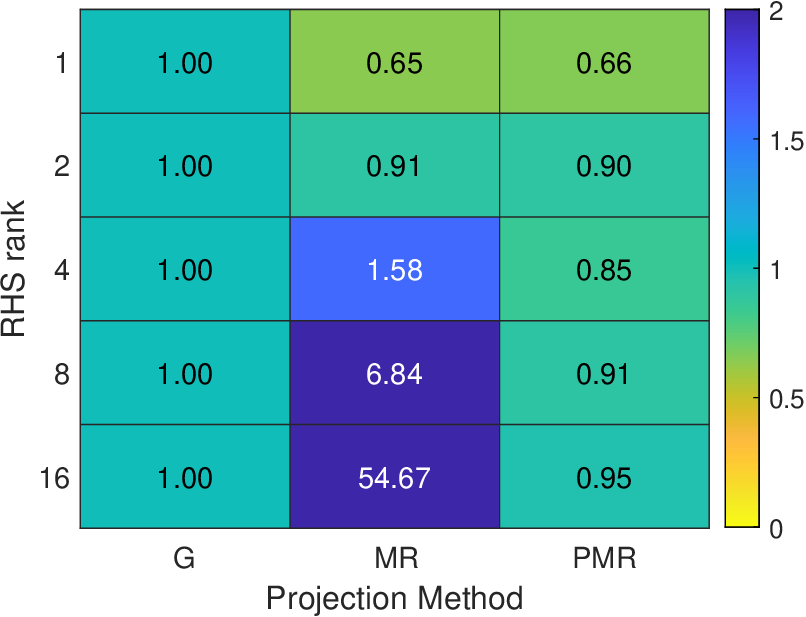} &
            \includegraphics[width=.45\textwidth]{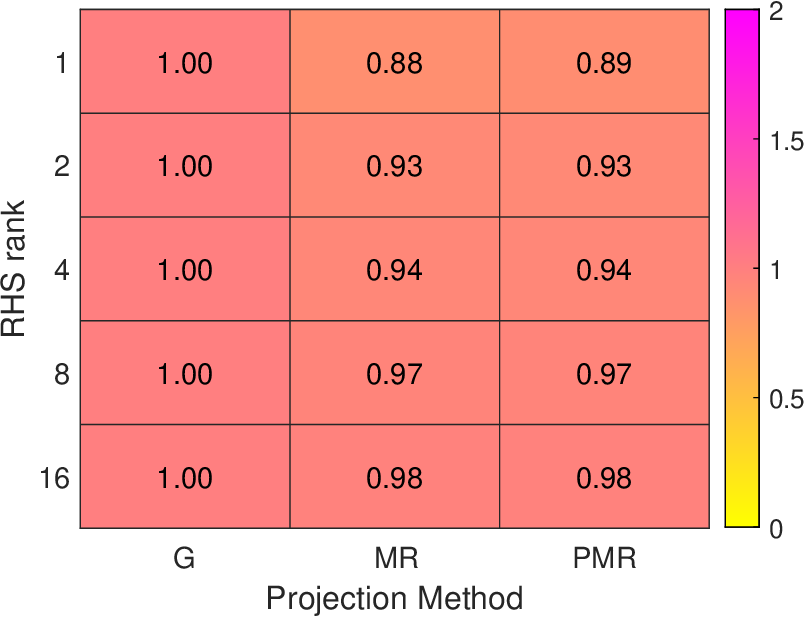}
        \end{tabular}
    \end{center}
    \caption{Performance results for Example~\ref{ex:vary_r_laplacian_2d_rand_n100}. \label{fig:vary_r_laplacian_2d_rand_n100}}
\end{figure}

\begin{example} \label{ex:vary_r_conv_diff_3d_rand_n25}
    For this \convdiff problem we have fixed $\eps = 10^{-2}$.  The results are similar as for Example~\ref{ex:vary_r_laplacian_2d_rand_n100}, with both MR and PMR out-performing the Galerkin approach for $r = 1$.  However, as $r$ increases, MR becomes slower.  Meanwhile, PMR continues to demonstrate a slight advantage over Galerkin, with the advantage decreasing as $r$ increases.
\end{example}
\begin{figure}[htbp!]
    \begin{center}
        \small
        \begin{tabular}{cc}
            Timings ratio & Iteration counts ratio \\
            \includegraphics[width=.45\textwidth]{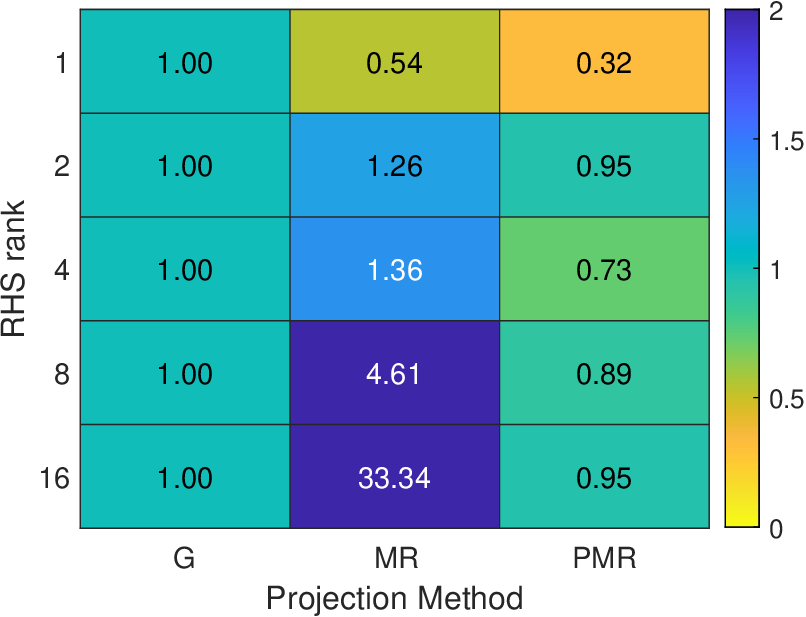} &
            \includegraphics[width=.45\textwidth]{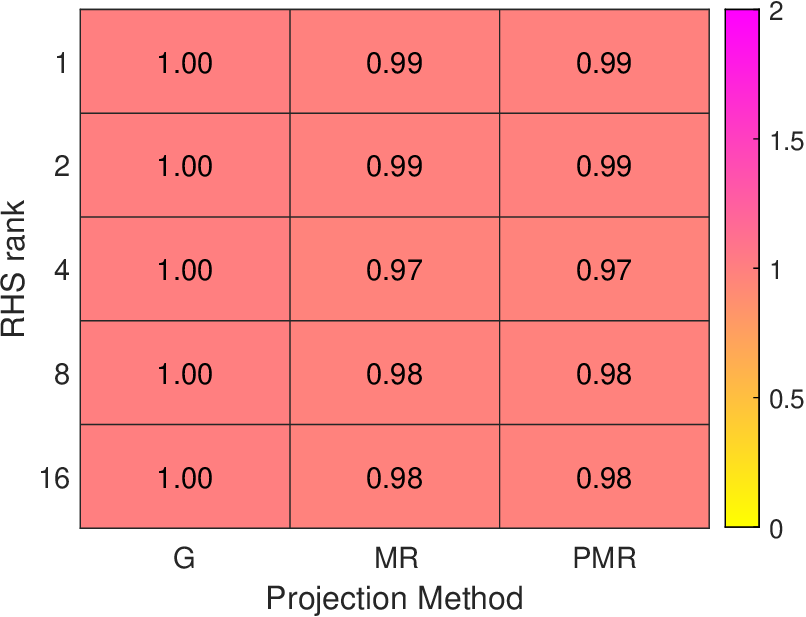}
        \end{tabular}
    \end{center}
    \caption{Performance results for Example~\ref{ex:vary_r_conv_diff_3d_rand_n25} \label{fig:vary_r_conv_diff_3d_rand_n25}}
\end{figure}

\subsection{Restarting} \label{sec:cr_num}
For the final set of examples, we study the behavior of the compress-and-restart strategy and compare between the Galerkin and PMR approaches.  We no longer consider the MR approach, as it is ill-suited for a compress-and-restart strategy, due to lacking the appropriate type of low-rank-modification formulation.

Our primary goal in this section is to study restarts and their effect on the monotonicity of the residual; we therefore set the compression tolerance to machine epsilon (i.e., approximately $10^{-16}$ for IEEE \texttt{int64}) to effectively turn off compression.

\begin{example} \label{ex:restarts_bad_cond_diag_rand_n100_r3}
    We again consider the \badconddiag problem with $n = 100$ and $r = 3$ and residual plot in Figure~\ref{fig:restarts_bad_cond_diag_rand}.  The maximum number of columns to be stored is set to $100$.  Although PMR only improves over the Galerkin approach by a few iterations, its monotonic residual makes it more reliable in practice.
\end{example}
\begin{figure}[htbp!]
    \begin{center}
        \includegraphics[width=.45\textwidth]{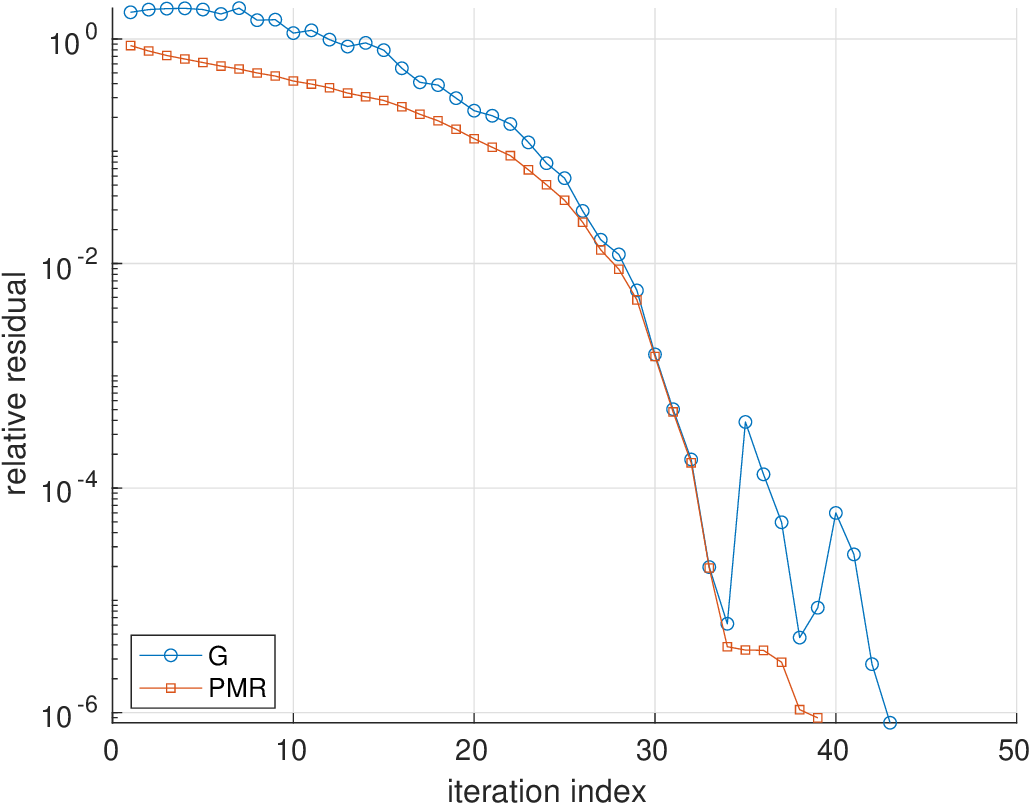}
    \end{center}
    \caption{Convergence results for Example~\ref{ex:restarts_bad_cond_diag_rand_n100_r3} \label{fig:restarts_bad_cond_diag_rand}}
\end{figure}

\begin{example} \label{ex:restarts_laplacian_2d_rand_n100_r3}
    We also look at the \laplacian once again, with $n = 10^4$ and $r = 3$ and maximum number of columns set to $96$.  Results are shown in Figure~\ref{fig:restarts_laplacian_2d_rand}.  Although the convergence of PMR is almost perfectly monotonic, it is notably slower than than of the Galerkin method.  It may therefore be reasonable to combine and switch methods after a certain point-- perhaps after the residual hits $10^{-4}$-- but determining a reliable heuristic in practice remains an open challenge.
\end{example}
\begin{figure}[htbp!]
    \begin{center}
        \includegraphics[width=.45\textwidth]{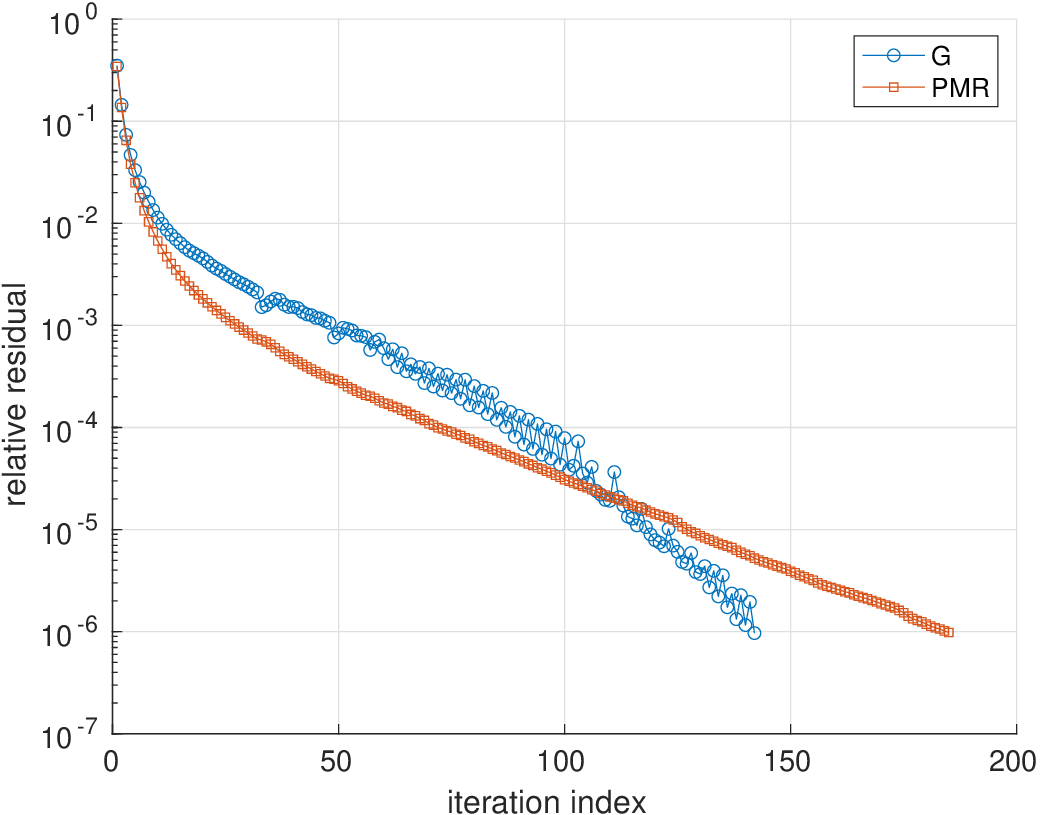}
    \end{center}
    \caption{Convergence results for Example~\ref{ex:restarts_laplacian_2d_rand_n100_r3}. \label{fig:restarts_laplacian_2d_rand}}
\end{figure}

\begin{example} \label{ex:restarts_rail_1357_n1357_r7}
    Finally, we return to the \rail problem, whose results for different maximum memory limits are shown in Figure~\ref{fig:restarts_rail_1357_n1357_r7}.  For the lower memory limit on the left, we see a situation similar to that of Example~\ref{ex:restarts_laplacian_2d_rand_n100_r3}, whereby the PMR residual is smooth and monotonic but ultimately converges more slowly than that of the Galerkin approach.  On the right, we see how the behavior changes with a higher memory tolerance: the PMR method is able to improve slightly and remain monotonic.
\end{example}
\begin{figure}[htbp!]
    \begin{center}
        \begin{tabular}{cc}
            $\texttt{memmax} = 420$ & $\texttt{memmax} = 840$ \\
            \includegraphics[width=.45\textwidth]{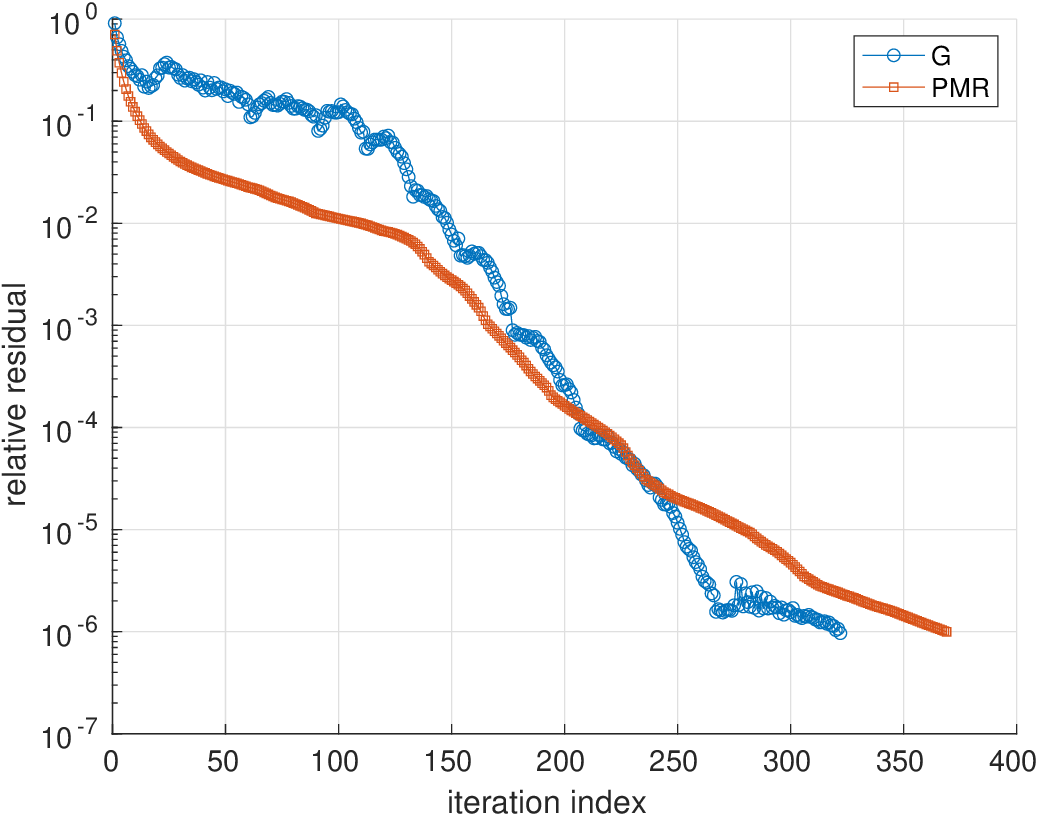} &
            \includegraphics[width=.45\textwidth]{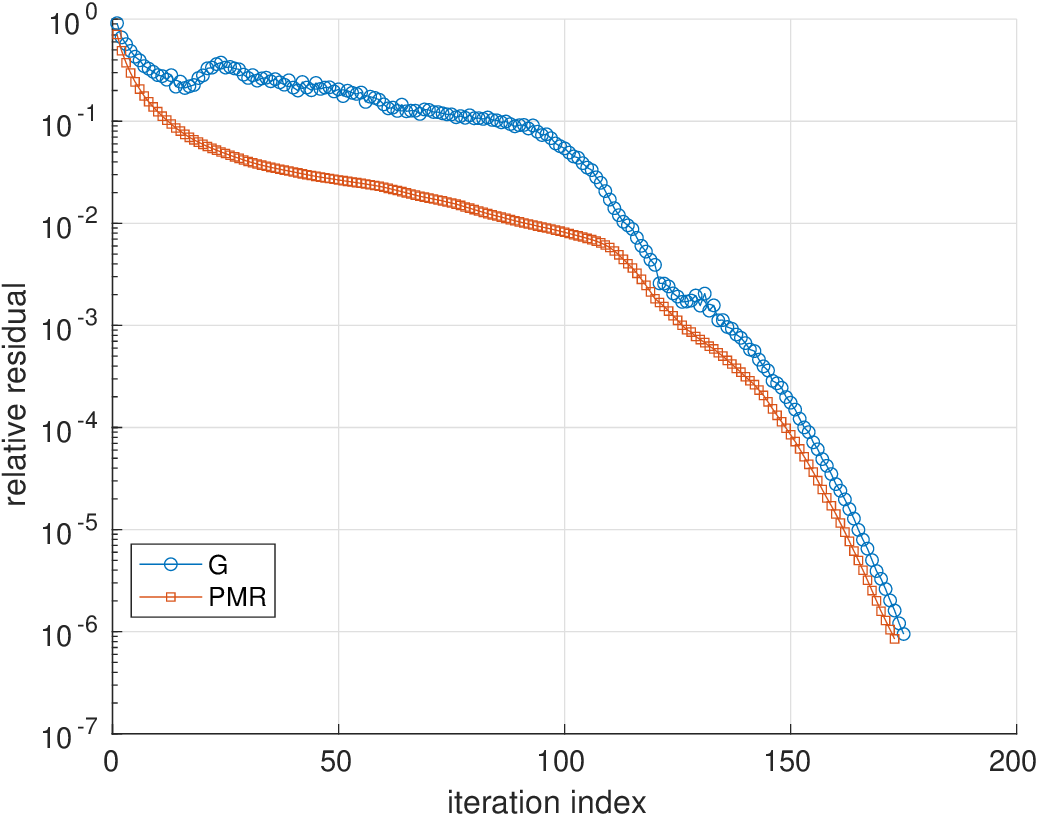}
        \end{tabular}
    \end{center}
    \caption{Convergence results for Example~\ref{ex:restarts_rail_1357_n1357_r7} \label{fig:restarts_rail_1357_n1357_r7}}
\end{figure}
\section{Conclusions and outlook} \label{sec:conclusions}
In this paper we have presented a new general framework for solving large-scale Lyapunov matrix equations. Low-rank-modified Galerkin methods, which include the Galerkin method as a special case with the zero modification, introduces a suitable low-rank correction in the projected equation aimed at achieving a certain target behaviour.  Driven by the goal of designing a computationally affordable scheme able to show a convergence rate similar to a minimal residual approach, we have proposed two non-trivial options for such a low-rank correction. 

The first one, PMR, is defined by taking inspiration from the relation between FOM and GMRES in the linear system setting. We showed that, under certain hypotheses on $A$, adopting this correction leads to a projection method that is well-defined. Moreover, we have depicted possible scenarios where we expect the performance of our new solver to be close to that achieved by MR. Our numerical results have confirmed such findings. On the other hand, further analysis is needed to fully understand the relation between the two approaches and other situations where PMR may be applicable.

The second low-rank correction we have proposed, NKS, is computed by minimizing the current residual norm at each iteration. In spite of its appealing theoretical features, computing this low-rank correction directly is unreasonably expensive, making its use limited in practice. Designing ad-hoc, more efficient optimization procedures is a venue worth pursuing to make this approach affordable in terms of computational cost. To this end, sketch-and-solve methods like, e.g., the Blendenpik algorithm~\cite{AvrMT10} could be a valid option. 

Our new framework is sufficiently flexible to handle low-rank corrections different from the ones we have proposed, and it would also be useful in designing low-rank corrections targeting other goals as, e.g., a certain spectral distribution of $\HH_m+\MM$.  We have also showed that, thanks to the low-rank format of the residual matrix computed by our novel low-rank modified Galerkin method, the latter can be successfully integrated in a compress-and-restart scheme for matrix equations.  

For the sake of simplicity, we have restricted our analysis to the use of polynomial Krylov subspaces. However, generalizing our approach to the case of more sophisticated approximation spaces like extended and rational Krylov subspaces is just a technical exercise. 

As a possible outlook, we envision our new low-rank modified Galerkin framework to be applied to the solution of other matrix equations like, e.g., generalized Lyapunov equations \kl{or algebraic Ricatti equations}.

\section*{Acknowledgments}%
\addcontentsline{toc}{section}{Acknowledgments}
This work began in 2020, during which time the first author was funded by the Charles University PRIMUS grant, project no.~PRIMUS/19/SCI/11.

The second author is member of the INdAM Research Group GNCS that partially supported this work through the funded project GNCS2023 ``Metodi avanzati per la risoluzione di PDEs su griglie strutturate, e non'' (CUP\_E53C22001930001).


\addcontentsline{toc}{section}{References}
\bibliographystyle{abbrvurl}
\bibliography{LowRank4Lyap}

\begin{thebibliography}{10}

\bibitem{Ant05b}
A.~C. Antoulas.
\newblock {\em Approximation of {{Large-Scale Dynamical Systems}}}.
\newblock {Society for Industrial and Applied Mathematics}, 2005.
\newblock \href {https://doi.org/10.1137/1.9780898718713}
  {\path{doi:10.1137/1.9780898718713}}.

\bibitem{AvrMT10}
H.~Avron, P.~Maymounkov, and S.~Toledo.
\newblock Blendenpik: {{Supercharging LAPACK}}'s {{Least-Squares Solver}}.
\newblock {\em SIAM J. Sci. Comput.}, 32(3):1217--1236, 2010.
\newblock \href {https://doi.org/10.1137/090767911}
  {\path{doi:10.1137/090767911}}.

\bibitem{BakES15}
J.~Baker, M.~Embree, and J.~Sabino.
\newblock Fast singular value decay for {{Lyapunov}} solutions with nonnormal
  coefficients.
\newblock {\em SIAM J. Matrix Anal. Appl.}, 36(2):656--668, 2015.
\newblock \href {https://doi.org/10.1137/140993867}
  {\path{doi:10.1137/140993867}}.

\bibitem{Bau08a}
U.~Baur.
\newblock Low rank solution of data-sparse {{Sylvester}} equations.
\newblock {\em Numer. Linear Algebra Appl.}, 15(9):837--851, 2008.
\newblock \href {https://doi.org/10.1002/nla.605} {\path{doi:10.1002/nla.605}}.

\bibitem{BauB06}
U.~Baur and P.~Benner.
\newblock Factorized solution of {{Lyapunov}} equations based on hierarchical
  matrix arithmetic.
\newblock {\em Computing}, 78(3):211--234, 2006.
\newblock \href {https://doi.org/10.1007/s00607-006-0178-y}
  {\path{doi:10.1007/s00607-006-0178-y}}.

\bibitem{BenKS16a}
P.~Benner, P.~K{\"u}rschner, and J.~Saak.
\newblock Frequency-limited balanced truncation with low-rank approximations.
\newblock {\em SIAM J. Sci. Comput.}, 38(1):A471--A499, 2016.
\newblock \href {https://doi.org/10.1137/15M1030911}
  {\path{doi:10.1137/15M1030911}}.

\bibitem{BenS13a}
P.~Benner and J.~Saak.
\newblock Numerical solution of large and sparse continuous time algebraic
  matrix {{Riccati}} and {{Lyapunov}} equations: a state of the art survey.
\newblock {\em GAMM-Mitt.}, 36(1):32--52, 2013.
\newblock \href {https://doi.org/10.1002/gamm.201310003}
  {\path{doi:10.1002/gamm.201310003}}.

\bibitem{ChaV05}
Y.~Chahlaoui and P.~Van~Dooren.
\newblock Benchmark {{Examples}} for {{Model Reduction}} of {{Linear
  Time-Invariant Dynamical Systems}}.
\newblock In P.~Benner, D.~C. Sorensen, and V.~Mehrmann, editors, {\em Dimens.
  {{Reduct}}. {{Large-Scale Syst}}.}, Lecture {{Notes}} in {{Computational
  Science}} and {{Engineering}}, pages 379--392, {Berlin, Heidelberg}, 2005.
  {Springer}.
\newblock \href {https://doi.org/10.1007/3-540-27909-1_24}
  {\path{doi:10.1007/3-540-27909-1_24}}.

\bibitem{DavH11}
T.~A. Davis and Y.~Hu.
\newblock The {{University}} of {{Florida Sparse Matrix Collection}}.
\newblock {\em ACM Trans. Math. Softw.}, 38(1), 2011.
\newblock \href {https://doi.org/10.1145/2049662.2049663}
  {\path{doi:10.1145/2049662.2049663}}.

\bibitem{DruS11}
V.~Druskin and V.~Simoncini.
\newblock Adaptive rational {{Krylov}} subspaces for large-scale dynamical
  systems.
\newblock {\em Systems Control Lett.}, 60(8):546--560, 2011.
\newblock \href {https://doi.org/10.1016/j.sysconle.2011.04.013}
  {\path{doi:10.1016/j.sysconle.2011.04.013}}.

\bibitem{FroGS14a}
A.~Frommer, S.~G{\"u}ttel, and M.~Schweitzer.
\newblock Convergence of restarted {{Krylov}} subspace methods for
  {{Stieltjes}} functions of matrices.
\newblock {\em SIAM J. Matrix Anal. Appl.}, 35(4):1602--1624, 2014.
\newblock \href {https://doi.org/10.1137/140973463}
  {\path{doi:10.1137/140973463}}.

\bibitem{FroLS17}
A.~Frommer, K.~Lund, and D.~B. Szyld.
\newblock Block {{Krylov}} subspace methods for functions of matrices.
\newblock {\em Electron. Trans. Numer. Anal.}, 47:100--126, 2017.
\newblock URL: \url{https://epub.oeaw.ac.at/0xc1aa5576%200x0037106a.pdf}.

\bibitem{FroLS20}
A.~Frommer, K.~Lund, and D.~B. Szyld.
\newblock Block {{Krylov}} subspace methods for functions of matrices {{II}}:
  {{Modified}} block {{FOM}}.
\newblock {\em SIAM J. Matrix Anal. Appl.}, 41(2):804--837, 2020.
\newblock \href {https://doi.org/10.1137/19M1255847}
  {\path{doi:10.1137/19M1255847}}.

\bibitem{GawJ90}
W.~Gawronski and J.-N. Juang.
\newblock Model reduction in limited time and frequency intervals.
\newblock {\em Int. J. Syst. Sci.}, 21(2):349--376, 1990.
\newblock \href {https://doi.org/10.1080/00207729008910366}
  {\path{doi:10.1080/00207729008910366}}.

\bibitem{Gut07}
M.~H. Gutknecht.
\newblock Block {{Krylov}} space methods for linear systems with multiple
  right-hand sides: {{An}} introduction.
\newblock In A.~H. Siddiqi, I.~S. Duff, and O.~Christensen, editors, {\em Mod.
  {{Math}}. {{Model}}. {{Methods Algorithms Real World Syst}}.}, pages
  420--447, {New Delhi}, 2007. {Anamaya}.
\newblock URL: \url{https://people.math.ethz.ch/~mhg/pub/delhipap.pdf}.

\bibitem{HesS52}
M.~R. Hestenes and E.~Stiefel.
\newblock Methods of conjugate gradients for solving linear systems.
\newblock {\em J. Res. Natl. Bur. Stand.}, 49(6):409--436, 1952.
\newblock \href {https://doi.org/10.6028/jres.049.044}
  {\path{doi:10.6028/jres.049.044}}.

\bibitem{Hig08a}
N.~J. Higham.
\newblock {\em Functions of {{Matrices}}}.
\newblock {SIAM}, {Philadelphia}, 2008.

\bibitem{HorJ91}
R.~A. Horn and C.~R. Johnson.
\newblock {\em Topics in {{Matrix Analysis}}}.
\newblock {Cambridge University Press}, {Cambridge}, 1991.

\bibitem{HuR92}
D.~Y. Hu and L.~Reichel.
\newblock Krylov-subspace methods for the {{Sylvester}} equation.
\newblock {\em Linear Algebra Appl.}, 172:283--313, 1992.
\newblock \href {https://doi.org/10.1016/0024-3795(92)90031-5}
  {\path{doi:10.1016/0024-3795(92)90031-5}}.

\bibitem{JbiR19}
K.~Jbilou and M.~Raydan.
\newblock Nonlinear {{Least-Squares Approach}} for {{Large-Scale Algebraic
  Riccati Equations}}.
\newblock {\em SIAM J. Sci. Comput.}, 41(4):A2193--A2211, 2019.
\newblock \href {https://doi.org/10.1137/18M1198922}
  {\path{doi:10.1137/18M1198922}}.

\bibitem{KorR05}
J.~G. Korvink and E.~B. Rudnyi.
\newblock Oberwolfach {{Benchmark Collection}}.
\newblock In P.~Benner, V.~Mehrmann, and D.~C. Sorensen, editors, {\em
  Dimension {{Reduction}} of {{Large-Scale Systems}}. {{Lecture Notes}} in
  {{Computational Science}} and {{Engineering}}}, volume~45, pages 311--315.
  {Springer}, {Berlin, Heidelberg}, 2005.
\newblock \href {https://doi.org/10.1007/3-540-27909-1_11}
  {\path{doi:10.1007/3-540-27909-1_11}}.

\bibitem{Kre14}
D.~Kressner.
\newblock Bivariate matrix functions.
\newblock {\em Oper. Matrices}, 8(2):449--466, 2014.
\newblock \href {https://doi.org/10.7153/oam-08-23}
  {\path{doi:10.7153/oam-08-23}}.

\bibitem{Kre19}
D.~Kressner.
\newblock A {{Krylov}} subspace method for the approximation of bivariate
  matrix functions.
\newblock In D.~A. Bini, F.~Di~Benedetto, E.~Tyrtyshnikov, and M.~Van~Barel,
  editors, {\em Struct. {{Matrices Numer}}. {{Linear Algebr}}.}, pages
  197--214. {Springer International Publishing}, 2019.
\newblock \href {https://doi.org/10.1007/978-3-030-04088-8_10}
  {\path{doi:10.1007/978-3-030-04088-8_10}}.

\bibitem{KreLMetal21}
D.~Kressner, K.~Lund, S.~Massei, and D.~Palitta.
\newblock Compress-and-restart block {{Krylov}} subspace methods for
  {{Sylvester}} matrix equations.
\newblock {\em Numer Linear Algebr Appl}, 28(1):e2339, 2021.
\newblock \href {https://doi.org/10.1002/nla.2339}
  {\path{doi:10.1002/nla.2339}}.

\bibitem{Kur16}
P.~K{\"u}rschner.
\newblock {\em Efficient low-rank solution of large-scale matrix equations}.
\newblock PhD thesis, Faculty of Mathematics, Otto-von-Guericke-University,
  Magdeburg, 2016.
\newblock URL:
  \url{https://pure.mpg.de/rest/items/item_2246796/component/file_2296741/content}.

\bibitem{LiW02}
J.-R. Li and J.~White.
\newblock Low {{Rank Solution}} of {{Lyapunov Equations}}.
\newblock {\em SIAM J. Matrix Anal. Appl.}, 24(1):260--280, 2002.
\newblock \href {https://doi.org/10.1137/S0895479801384937}
  {\path{doi:10.1137/S0895479801384937}}.

\bibitem{LinS13}
Y.~Lin and V.~Simoncini.
\newblock Minimal residual methods for large scale {{Lyapunov}} equations.
\newblock {\em Appl. Numer. Math.}, 72:52--71, 2013.
\newblock \href {https://doi.org/10.1016/j.apnum.2013.04.004}
  {\path{doi:10.1016/j.apnum.2013.04.004}}.

\bibitem{PalSS23a}
D.~Palitta, M.~Schweitzer, and V.~Simoncini.
\newblock Sketched and {{Truncated Polynomial Krylov Subspace Methods}}:
  {{Matrix Equations}}.
\newblock Technical Report arXiv:2311.16019, {arXiv}, 2023.
\newblock \href {https://doi.org/10.48550/arXiv.2311.16019}
  {\path{doi:10.48550/arXiv.2311.16019}}.

\bibitem{PalS16}
D.~Palitta and V.~Simoncini.
\newblock Matrix-equation-based strategies for convection-diffusion equations.
\newblock {\em BIT}, 56(2):751--776, 2016.
\newblock \href {https://doi.org/10.1007/s10543-015-0575-8}
  {\path{doi:10.1007/s10543-015-0575-8}}.

\bibitem{PalS20}
D.~Palitta and V.~Simoncini.
\newblock Optimality {{Properties}} of {{Galerkin}} and
  {{Petrov}}\textendash{{Galerkin Methods}} for {{Linear Matrix Equations}}.
\newblock {\em Vietnam J. Math.}, 48:791--807, 2020.
\newblock \href {https://doi.org/10.1007/s10013-020-00390-7}
  {\path{doi:10.1007/s10013-020-00390-7}}.

\bibitem{Pen00}
T.~Penzl.
\newblock Eigenvalue decay bounds for solutions of {{Lyapunov}} equations: the
  symmetric case.
\newblock {\em Syst. Control Lett}, 40(2):139--144, 2000.
\newblock \href {https://doi.org/10.1016/S0167-6911(00)00010-4}
  {\path{doi:10.1016/S0167-6911(00)00010-4}}.

\bibitem{Sim07}
V.~Simoncini.
\newblock A new iterative method for solving large-scale {{Lyapunov}} matrix
  equations.
\newblock {\em SIAM J. Sci. Comput.}, 29(3):1268--1288, 2007.
\newblock \href {https://doi.org/10.1137/06066120X}
  {\path{doi:10.1137/06066120X}}.

\bibitem{Sim16a}
V.~Simoncini.
\newblock Computational methods for linear matrix equations.
\newblock {\em SIAM Rev.}, 38(3):377--441, 2016.
\newblock \href {https://doi.org/10.1137/130912839}
  {\path{doi:10.1137/130912839}}.

\bibitem{SimG96}
V.~Simoncini and E.~Gallopoulos.
\newblock Convergence properties of block {{GMRES}} and matrix polynomials.
\newblock {\em Linear Algebra Appl.}, 247:97--119, 1996.
\newblock \href {https://doi.org/10.1016/0024-3795(95)00093-3}
  {\path{doi:10.1016/0024-3795(95)00093-3}}.

\bibitem{SnyZ70}
J.~Snyders and M.~Zakai.
\newblock On nonnegative solutions of the equation \${{AD}}+{{DA}} =-{{C}}\$.
\newblock {\em SIAM J. Appl. Math.}, 18:704--714, 1970.
\newblock \href {https://doi.org/10.1137/0118063} {\path{doi:10.1137/0118063}}.

\bibitem{VanV10}
B.~Vandereycken and S.~Vandewalle.
\newblock A {{Riemannian Optimization Approach}} for {{Computing Low-Rank
  Solutions}} of {{Lyapunov Equations}}.
\newblock {\em SIAM J. Matrix Anal. Appl.}, 31(5):2553--2579, 2010.
\newblock \href {https://doi.org/10.1137/090764566}
  {\path{doi:10.1137/090764566}}.

\bibitem{ZhoDG96}
K.~Zhou, J.~C. Doyle, and K.~Glover.
\newblock {\em Robust and {{Optimal Control}}}.
\newblock {Prentice-Hall}, {Upper Saddle River, NJ}, 1996.

\end{thebibliography}
  
\end{document}